\tikzset{join/.code=\tikzset{after node path={%
\ifx\tikzchainprevious\pgfutil@empty\else(\tikzchainprevious)%
edge[every join]#1(\tikzchaincurrent)\fi}}}
\tikzset{>=stealth',every on chain/.append style={join},
         every join/.style={->}}
\tikzstyle{labeled}=[execute at begin node=$\scriptstyle,
\definecolor{ffffff}{rgb}{1.0,1.0,1.0}
\definecolor{qqqqff}{rgb}{0.0,0.0,1.0}
\definecolor{ffqqqq}{rgb}{1.0,0.0,0.0}
\definecolor{zzzzqq}{rgb}{0.6,0.6,0.0}
\definecolor{uququq}{rgb}{0,0,0}
\definecolor{qqttcc}{rgb}{0.,0.2,0.8}
\definecolor{ffqqqq}{rgb}{1.,0.,0.}
\definecolor{cqcqcq}{rgb}{0.7529411764705882,0.7529411764705882,0.7529411764705882}
\definecolor{ffffff}{rgb}{1.0,1.0,1.0}
\definecolor{qqqqff}{rgb}{0.0,0.0,1.0}
\definecolor{ffqqqq}{rgb}{1.0,0.0,0.0}
\definecolor{zzzzqq}{rgb}{0.6,0.6,0.0}
\definecolor{marronet}{rgb}{0.6,0.2,0}
\definecolor{negre}{rgb}{0,0,0}
\definecolor{vermell}{rgb}{0.8,0.05,0.05}
\definecolor{blau}{rgb}{0.2,0.1,1}
\definecolor{blauclar}{rgb}{0.,0.,1.}
\definecolor{grisfosc}{rgb}{0.25098039215686274,0.25098039215686274,0.25098039215686274}
\definecolor{verd}{rgb}{0.1,0.6,0.1}
\definecolor{taronja}{rgb}{0.9,0.6,0.05}
\definecolor{vermellclar}{rgb}{1.,0.,0.}
\definecolor{verdet}{rgb}{0,0.8,0.1}
\definecolor{blauverd}{rgb}{0,0.4,0.2}
\definecolor{grisclar}{rgb}{0.6274509803921569,0.6274509803921569,0.6274509803921569}
\newcommand{\C}{{\mathbb C}}       
\newcommand{\R}{{\mathbb R}}       
\newcommand{\N}{{\mathbb N}}       
\newcommand{\DDD}{{\mathbb D}}
\newcommand{\rf}[1]{{(\ref{#1})}}
\newcommand{\supp}{{\rm supp}}
\newcommand{\ve}{{\varepsilon}}
\newcommand{\norm}[1]{{\left\| {#1} \right\|}}
\newtheorem{theorem}{Theorem}
\newtheorem*{theorem*}{Theorem}
\newtheorem{lemma}[theorem]{Lemma}
\newtheorem{corollary}[theorem]{Corollary}
\newtheorem*{corollary*}{Corollary}
\newtheorem{proposition}[theorem]{Proposition}
\newtheorem{definition}[theorem]{Definition}
\newtheorem{remark}[theorem]{Remark}
\numberwithin{subsection}{section}
\numberwithin{theorem}{section}
\numberwithin{equation}{section}
\numberwithin{figure}{section}
\title{Sharp bounds for composition with quasiconformal mappings in Sobolev spaces}
\author{Marcos Oliva and Mart\'i Prats
\thanks{Departamento de Ma\-te\-m\'a\-ti\-cas, Universidad Aut\'onoma de Madrid-ICMAT, Spain,  \texttt{marcos.delaoliva@uam.es}, \texttt{marti.prats@uam.es}.}}
\begin{document}
\maketitle
\bibliographystyle{alpha}

\begin{abstract} 
Let $\phi$ be a quasiconformal mapping, and let $T_\phi$ be the composition operator which maps $f$ to $f\circ\phi$. Since $\phi$ may not be bi-Lipschitz, the composition operator need not map Sobolev spaces to themselves.   The study begins with the behavior of $T_\phi$ on $L^p$ and  $W^{1,p}$ for $1<p<\infty$. This cases are well understood but alternative proofs of some known results are provided. Using interpolation techniques it is seen that compactly supported Bessel potential  functions in $H^{s,p}$ are sent to $H^{s,q}$ whenever $0<s<1$ for appropriate values of $q$.
The techniques used lead to sharp results and they can be applied to Besov  spaces as well.
\end{abstract}

\renewcommand{\abstractname}{}
\begin{abstract}
{\bf Keywords}: Sobolev spaces, fractional smoothness, quasiconformal mappings, composition operator.

{\bf MSC 2010}: 30C65, 46E35, 47A57.
\end{abstract}

\section{Introduction}
Given a quasiconformal homeomorphism $\phi:\Omega\to\Omega'$ between domains in $\R^n$, we consider the composition operator $T_\phi$ which maps every measurable function $f: \Omega' \to \R$ to $f\circ\phi$. It is well known that $\phi$ lies in a certain Sobolev space $W^{1,p}_{loc}$ with $p>n$,  that is, the space of locally $p$-integrable functions with locally $p-$integrable derivatives, and in some H\"older class $C^s$ with $0<s<1$, i.e. for any  $K\subset\Omega$ compact $\phi$ is bounded and continuous in $K$ with $|\phi(x)-\phi(y)|\leq C_K|x-y|^s$ for $x,y\in K$. The composition operator $T_\phi$ is a self-map of $W^{1,n}$. However, since $\phi$ may not be bi-Lipschitz, the composition operator does not necessarily map other Sobolev spaces to themselves.  

A characterization of homeomorphisms which give rise to bounded composition operators is given in \cite{Ukhlov} and \cite{Kleprlik}.

Over the last decade, the study on the stability of the planar Calder\'on inverse conductivity problem raised questions on the range of $T_\phi$. The ground-breaking work of Astala and P\"aiv\"arinta \cite{AstalaPaivarinta} showing uniqueness of the solution to the problem was adapted by Barcel\'o, Faraco and Ruiz  in \cite{BarceloFaracoRuiz} to provide stability in Lipschitz domains with only H\"older a priori conditions on the conductivities. Some years later, Clop, Faraco and Ruiz weakened the a priori assumption on the conductivities to just a fractional Sobolev condition in \cite{ClopFaracoRuiz}, allowing the method to be applied to non-continuous conductivities, and later on the regularity assumptions on the boundary of the domain where severely reduced in \cite{FaracoRogers}. A deeper knowledge of the behavior of the composition operator may lead to better numerical methods for the electric impedance tomography (see \cite{AstalaMuellerPaivarintaSiltanen}, for instance).

We study quasiconformal mappings $\phi$ whose Jacobian determinant $J_\phi$ satisfies the estimates
\begin{equation}\label{eqAKBKRn}
\left(\int_U J_\phi(x)^{a} \, dx\right)^\frac1a \leq C_a, \mbox{\quad\quad and \quad\quad} \left(\int_U J_\phi(x)^{-b} \, dx\right)^\frac1b \leq C_b
\end{equation}
 for values $a>1$ and $b>0$, where  $U\subset \R^n$ is a certain domain (open and connected set). The existence of these values for any domain $U$ compactly contained in $\Omega$ can be derived from quasiconformality itself, but we may have better exponents for particular mappings, and this will imply better behavior of the composition operator. 

In \cite[Theorems 1.1 and 1.2]{HenclKoskela}, it is shown that, under the first condition in \rf{eqAKBKRn} the composition operator $T_\phi$ sends compactly supported $W^{1,p}$ functions to $W^{1,q}$ for certain couples $n\leq q<p$, that is, some integrability of the function and its gradient is lost. The main result of the present paper shows that this loss of integrability is common to all the Bessel potential spaces $H^{s,p}$ with $0\leq s \leq 1$ and $1<p<\infty$ and Besov spaces $B^s_{p,r}$ with $0< s < 1$, $1<p<\infty$ and $0<r\leq\infty$ (see Section \ref{secFractional} for the definitions). Note that $H^{0,p}=L^p$ and $H^{1,p}=W^{1,p}$.

\begin{theorem}\label{theoCompositionQCSobolevFractionalMain}
 Let $n\ge 2$, $0\leq s \leq 1$ and $1<p < \infty$. Given a quasiconformal mapping $\phi:\Omega \to \Omega'$ between two domains in $\R^{n}$, a ball $B$ with $2B\subset \Omega$, a ball $B'$ with $\phi(2B)\subset B'$ and positive real numbers $a$, $b$, $C_a$ and $C_b$ satisfying \rf{eqAKBKRn} for $U=2B$, let $q$ be defined by  $\frac1q = \frac1p + \frac1{c}\left|\frac{s}{n}-\frac1p\right|$, where we take $c=a$ if $sp \geq n$ and $c=b$ if $sp < n$. 
 If $q >1$,  then there exists a constant $C$ such that
\begin{equation}\label{eqTBoundedFractionalGeneral}
\norm{T_\phi f}_{ {H}^{s,q}(B)} \leq C \norm{f}_{{H}^{s,p}(B')}
\end{equation} 
and, if in addition $s\notin \{0,1\}$, then
\begin{equation}\label{eqTBoundedFractionalGeneralOther}
\norm{T_\phi f}_{B^s_{q,r}(B)} \leq C \norm{f}_{B^s_{p,r}(B')}
\end{equation} 
for every locally integrable function $f$ and every $0< r\leq \infty$, with constants not depending on $\phi$. If $s\in\{0,1\}$ and $p=\infty$, then \rf{eqTBoundedFractionalGeneral} holds as well.
\end{theorem}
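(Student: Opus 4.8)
The plan is to reduce the whole range $0<s<1$ to the two classical endpoints $s=0$ and $s=1$ and then recover the intermediate smoothness by interpolation, with the exponents chosen so that the interpolated integrability is exactly the $q$ in the statement. For $s=0$ one must bound $\norm{f\circ\phi}_{L^q(B)}$ by $\norm{f}_{L^p(B')}$. Writing $y=\phi(x)$ and applying the change of variables formula turns $\int_B|f\circ\phi|^q$ into $\int_{\phi(B)}|f|^q\,(J_\phi\circ\phi^{-1})^{-1}$; H\"older's inequality with exponent $p/q$ and a second change of variables reduce the Jacobian factor to $\int_{2B}J_\phi^{-q/(p-q)}$, which is finite and controlled by $C_b$ exactly when $q/(p-q)\le b$, i.e. at $\frac1q=\frac1p+\frac1b\cdot\frac1p$. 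This is the sharp endpoint in the regime governed by $C_b$.

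For $s=1$ the chain rule together with the quasiconformal distortion estimate $|D\phi|^n\le K\,J_\phi$ a.e. gives the pointwise bound $|\nabla(f\circ\phi)|\le C\,|(\nabla f)\circ\phi|\,J_\phi^{1/n}$. The same change of variables and H\"older inequality reduce the gradient term to $\int_{2B}J_\phi^{\,q(p/n-1)/(p-q)}$, whose exponent is positive when $p>n$ (so that $C_a$ is used) and negative when $p<n$ (so that $C_b$ is used); in both cases the critical value reproduces the $q$ of the statement. The zeroth order term $\norm{f\circ\phi}_{L^q}$ is controlled by the $s=0$ estimate when $p>n$ (where $f$ is moreover bounded), and when $p<n$ by first invoking the Sobolev--Poincar\'e inequality $\norm{f}_{L^{p^*}(B')}\le C\norm{f}_{W^{1,p}(B')}$, $p^*=np/(n-p)$, followed by the $s=0$ estimate at exponent $p^*$; a short computation shows the gradient is always the binding constraint. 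This yields an alternative proof of the Hencl--Koskela endpoint.

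To pass to $0<s<1$ we interpolate the operator between these endpoints, using complex interpolation $[H^{s_0,p_0},H^{s_1,p_1}]_\theta=H^{s_\theta,p_\theta}$ for the Bessel scale. In the subcritical regime $sp<n$ we interpolate the $s=0$ bound with an $s=1$ bound having $p_1<n$, so that both are governed by $C_b$; the interpolated reciprocal is then linear in $s$ and equals $\frac1q=\frac1p+\frac1b(\frac1p-\frac sn)$. Along the critical line $sp=n$, interpolating the trivial bound on $L^\infty=H^{0,\infty}$ with the classical $W^{1,n}$ self-map property (both of which preserve $sp=n$) gives the no-loss estimate $q=p$. Finally, in the supercritical regime $sp>n$ we interpolate such a no-loss point on $sp=n$ with an $s=1$ bound having $p_1>n$ (governed by $C_a$); a direct computation shows this reproduces $\frac1q=\frac1p+\frac1a(\frac sn-\frac1p)$. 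For the Besov scale we use real interpolation between two of the Bessel bounds just obtained, taken at smoothnesses $\sigma_0<s<\sigma_1$ lying on the same line $\sigma-n/p_\sigma=\mathrm{const}$ through the target; on such a line of equal differential dimension real interpolation produces genuine Besov spaces $B^s_{p,r}$ and $B^s_{q,r}$ for every $0<r\le\infty$, the operator mapping the domain line into a range line which one checks is again of constant differential dimension. The cases $p=\infty$, $s\in\{0,1\}$ are immediate from the boundedness, resp. Lipschitz-type behaviour, of composition.

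The main obstacle is the kink of the exponent at the critical line $sp=n$: a naive interpolation between $s=0$ and $s=1$ crosses it and yields a strictly worse, non-sharp $q$, so the interpolation endpoints must be kept within a single regime and the supercritical case must be bootstrapped off the no-loss estimate on $sp=n$. This forces a careful bookkeeping of which of $C_a$, $C_b$ controls each endpoint, and it is where the choice $c=a$ versus $c=b$ in the statement originates. Two further technical points must be handled with care: the nonlocal nature of the fractional norms requires localizing to the balls $B$ and $B'$ by cutoffs and extension operators compatible with $\phi(2B)\subset B'$ and with the Jacobian bounds on $U=2B$; and the hypothesis $q>1$ is needed to keep all interpolation endpoints in the admissible range where the above interpolation identities hold.
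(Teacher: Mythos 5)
Your overall strategy --- establish the $s=0$ and $s=1$ endpoints by change of variables, H\"older and the distortion inequality, then recover $0<s<1$ by complex interpolation with endpoints kept inside a single regime, plus real interpolation for the Besov scale --- is exactly the paper's strategy, and your endpoint computations and the subcritical interpolation are correct. However, there are two genuine gaps. The first is the critical line. You propose to interpolate ``the trivial bound on $L^\infty=H^{0,\infty}$ with the $W^{1,n}$ self-map property.'' But the identity $\left(L^{p_0},W^{1,p_1}\right)_{[s]}=H^{s,p}$ requires $p_0<\infty$, and $L^\infty$ is not $H^{0,\infty}$; the pair $(L^\infty,W^{1,n})$ is not an admissible complex interpolation couple for this purpose. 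This is precisely the technical crux of the paper, which resolves it by interpolating $L^{p_0}\to L^{q_0}$ (with $1/p_0$ small but positive, so the $C_b$ bound enters with a harmless power) against $W^{1,n}\to W^{1,n}$, obtaining bounds on $H^{s,q}$ for $q$ slightly below $n/s$ that are \emph{uniform} in $q$, and then passing to the limit $q\to n/s$ via a lifting-plus-monotone-convergence argument (Lemma \ref{lemUniformlyBounded}). Your supercritical case bootstraps off the critical line, so it inherits this gap; the interpolation geometry you describe there (critical point against an $s=1$ point with $p_1>n$) does reproduce the right $q$, but the endpoint you lean on is unproven as stated. The same approximation-and-limit device is needed there too, since $L^\infty$ reappears as the natural zero-smoothness endpoint.

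The second gap is the Besov case. You take real interpolation between two Bessel bounds at smoothnesses $\sigma_0<s<\sigma_1$ lying on a line of constant differential dimension, so the two endpoint spaces have \emph{different} integrability exponents. The identity $\left(H^{s_0,p_0},H^{s_1,p_1}\right)_{\theta,r}=B^s_{p,r}$ with $p_0\neq p_1$ fails for general $r$: real interpolation with distinct integrabilities produces Besov spaces only for the special value $r=p$, and otherwise yields Lorentz-type hybrids. The paper instead uses $\left(H^{s_0,p},H^{s_1,p}\right)_{\Theta,r}=B^s_{p,r}$ with the \emph{same} $p$ at both smoothnesses; since the sharp target exponent then varies with the endpoint smoothness, this again forces a limiting argument with neighboring spaces and a Besov analogue of Lemma \ref{lemUniformlyBounded}. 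Finally, a smaller point: you correctly note that $q>1$ must keep the interpolation endpoints admissible, but you do not verify that admissible $p_0,p_1$ (with $q_0,q_1>1$) actually exist; the paper's choice of endpoints along lines through the point $(1,n)$ in the $(1/p,s)$-diagram is what guarantees this.
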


Previous results (\cite[Proposition 4.2]{ClopFaracoRuiz} and \cite[Theorem 1.2]{HenclKoskelaComposition}) show that $T_\phi$ sends compactly supported $H^{s,q}\cap L^\infty$ functions (with $0<s<1$) to $H^{\beta,q}$ functions for certain $\beta<s$, that is, with a loss on the smoothness parameter. More precisely, the statement of the latter theorem settles that question for mappings between diagonal Besov spaces $B^s_{q,q}\to B^\beta_{q,q}$, and it establishes the supremum of the admissible values of $\beta$ as $\frac{bs}{b+1-\frac{sq}{n}}=\frac{\frac sq}{\frac 1q + \frac 1b \left(\frac 1q -\frac sn\right)}$. 

These results can be recovered from Theorem \ref{theoCompositionQCSobolevFractionalMain}. Indeed, by \cite[Theorem 2.2.5]{RunstSickel}, for $0<s<\infty$, $0<q<\infty$, $0<r,\ell \leq\infty$ and $0<\Theta<1$, 
$$\norm{f}_{F^{\Theta s}_{\frac q\Theta, r}}\leq C_{s,q,r,\Theta} \norm{f}_{F^s_{q,\ell}}\norm{f}_{L^\infty} \mbox{\quad and \quad} \norm{f}_{B^{\Theta s}_{\frac q\Theta, \frac r\Theta}}\leq C_{s,q,r,\Theta} \norm{f}_{B^s_{q,r}}\norm{f}_{L^\infty}.$$
For $sq<n$, taking $\beta:=\frac{\frac sq}{\frac 1q + \frac 1b \left(\frac 1q -\frac sn\right)}$ and $p$ so that $\frac1q = \frac1p + \frac1{b}\left(\frac1p-\frac{\beta}{n}\right)$ one can check that $\frac{\beta}s=\frac{1/p}{1/q}$. Thus, chosing $\Theta = \frac qp$, $r:=q$ and given a function $ f\in B^s_{q,q} \cap L^\infty$ with compact support, we have that $\norm{f}_{B^{\beta}_{p, p}}\leq C \norm{f}_{B^s_{q,q}}\norm{f}_{L^\infty}.$
Using Theorem \ref{theoCompositionQCSobolevFractionalMain}, it follows that 
\begin{equation*}
\norm{T_\phi f}_{B^\beta_{q,p} (B)} \leq C_{s,p,q} \norm{f}_{B^\beta_{p,p} (B)}\leq C \norm{f}_{B^s_{q,q}}\norm{f}_{L^\infty},
\end{equation*} 
 recovering  \cite[Theorem 1.2]{HenclKoskelaComposition} by elementary embeddings and obtaining the end-point with a change on the secondary integrability index. In Bessel potential spaces the estimate reads as
\begin{equation*}
\norm{T_\phi f}_{H^{\beta,q} (B)} \leq C_{s,p,q} \norm{f}_{H^{\beta,p} (B)}\leq C \norm{f}_{H^{s,q}}\norm{f}_{L^\infty}.
\end{equation*}

However, the result in  \cite[Theorem 1.1]{HenclKoskelaComposition} can only be partially recovered. Indeed, condition \rf{eqAKBKRn} implies that $\phi^{-1}\in W^{1,n(b+1)}(U)$, arguing as in \rf{eqChangeInB} below. Thus,  $\phi^{-1}\in C^{\frac{b}{b+1}}(U)$. Therefore, $\phi$ satisfies that
\begin{equation}\label{eqConditionBalls}
|\phi(B)| \geq C |B|^{\alpha}
\end{equation}
for every ball $B\subset U$, with $\alpha=\frac{b+1}{b}$. According to \cite[Theorem 1.1]{HenclKoskelaComposition} this implies that given a function $f\in B^s_{q,q}(\phi(B))$, the composition operator maps it to $f\circ\phi\in B^\beta_{q,q}(B)$, where $\beta=\frac nq - \alpha \left(\frac nq -s\right)$.
As before, one can check that this statement when $\alpha = \frac{b+1}{b}$ is a consequence of Theorem \ref{theoCompositionQCSobolevFractionalMain} using the embeddings
$$\norm{f}_{F^{\beta}_{p, r}}\leq C_{s,q,r,\ell} \norm{f}_{F^s_{q,\ell}} \mbox{\quad and \quad} \norm{f}_{B^{\beta}_{p, r}}\leq C_{s,q,r} \norm{f}_{B^s_{q,r}},$$
which hold because $\beta = s-\frac nq + \frac np$ (see \cite[Theorem 2.7.1]{TriebelTheory}). 
As the reader may note, there may be values of $\alpha< \frac{b+1}{b}$ for which \rf{eqConditionBalls} holds but with no counterpart in the spirit of \rf{eqAKBKRn}. The planar case illustrates how this result must be taken into account.

Indeed, given a $K$-quasiconformal mapping $\phi : \C \to \C$, we may choose 
$$- \mathbf{b_K} < - b < 0 < a < \mathbf{a_K}$$
where $\mathbf{b_K}:=\frac{1}{K-1}$ and let $\mathbf{a_K}:=\frac{K}{K-1}$. In \cite[Theorem 13.4.2]{AstalaIwaniecMartin} it is shown that $J_\phi^a$ and $J_\phi^{-b}$ are locally integrable, with
$$\fint_B J_\phi^a \leq \frac{C_K }{1- \frac{a}{\mathbf{a_K}}} \left(\frac{|\phi(B)|}{|B|}\right)^a,$$
and
$$\fint_B J_\phi^{-b} \leq \frac{C_K}{1- \frac{b}{\mathbf{b_K}}} \left(\frac{|B|}{|\phi(B)|}\right)^{b}.$$
 On the other hand, although for $b=\mathbf{b_K}$ this integral may blow up (see \cite[Theorem 13.2.3]{AstalaIwaniecMartin}, for instance), still we have the end-point H\"older regularity
$$|\phi(B)| \geq C |B|^{K}.$$
Combining Theorem \ref{theoCompositionQCSobolevFractionalMain} and \cite[Theorem 1.1]{HenclKoskelaComposition} we get the following corollary.

\begin{corollary}\label{coroCompositionQCSobolevPlanar}
 Let $K\geq 1$. Let  $0\leq s \leq 1$ and $1<p< \infty$. Given domains $\Omega,\Omega'\subset \C$, a  $K$-quasiconformal mapping $\phi:\Omega \to \Omega'$, a ball $B$ with $2B\subset \Omega$, a ball $B'$ with $\phi(2B)\subset B'$, if $1>\frac1q > \frac1p + \frac1{\mathbf{c_K}}\left|\frac{s}{2}-\frac1p\right|$ (where we take $\mathbf{c_K}=\mathbf{a_K}$ if $sp \geq 2$ and $\mathbf{c_K}=\mathbf{b_K}$ if $sp < 2$), there exists a constant $C$ such that
\begin{equation*}
\norm{T_\phi f}_{H^{s,q}(B)} \leq C\norm{f}_{H^{s,p}(B')}
\end{equation*} 
and, if $s\notin \{0,1\}$, then
\begin{equation*}
\norm{T_\phi f}_{B^s_{q,r}(B)} \leq C \norm{f}_{B^s_{p,r}(B')}.
\end{equation*} 
Moreover, whenever $sp<2$ and $\beta=s-(K-1)\left(\frac 2p - s\right)>0$, we have that
\begin{equation*}
\norm{T_\phi f}_{B^\beta_{p,p}(B)} \leq C\norm{f}_{B^s_{p,p}(B')}.
\end{equation*} 
\end{corollary}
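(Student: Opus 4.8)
The plan is to read the first two displays off Theorem~\ref{theoCompositionQCSobolevFractionalMain} with $n=2$, and to obtain the third from the end-point H\"older regularity via \cite[Theorem 1.1]{HenclKoskelaComposition}. For the first two inequalities I would invoke \cite[Theorem 13.4.2]{AstalaIwaniecMartin} on $U=2B$, which guarantees \rf{eqAKBKRn} for \emph{every} $1<a<\mathbf{a_K}$ and $0<b<\mathbf{b_K}$, with finite constants (the comparison factors $|\phi(2B)|/|2B|$ appearing there are finite and positive because $\phi$ is a homeomorphism with $\phi(2B)\subset B'$). Set
\[
c_0:=\frac{\left|\frac s2-\frac1p\right|}{\frac1q-\frac1p},
\]
which is well defined and satisfies $c_0<\mathbf{c_K}$ exactly because the hypothesis is the \emph{strict} inequality $\frac1q>\frac1p+\frac1{\mathbf{c_K}}\left|\frac s2-\frac1p\right|$. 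Choosing $c$ with $c_0\le c<\mathbf{c_K}$ in the role dictated by the theorem ($c=a$ when $sp\ge2$, $c=b$ when $sp<2$, and picking any admissible value for the remaining exponent so that both parts of \rf{eqAKBKRn} hold), the exponent $\widetilde q$ produced by Theorem~\ref{theoCompositionQCSobolevFractionalMain}, determined by $\frac1{\widetilde q}=\frac1p+\frac1c\left|\frac s2-\frac1p\right|$, satisfies $\widetilde q\ge q>1$. The theorem then gives the bound into $H^{s,\widetilde q}(B)$ and $B^s_{\widetilde q,r}(B)$, and the elementary embeddings $H^{s,\widetilde q}(B)\hookrightarrow H^{s,q}(B)$ and $B^s_{\widetilde q,r}(B)\hookrightarrow B^s_{q,r}(B)$ on the bounded ball $B$ yield the first two displays.

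When $sp<2$ one has $c_0>0$ and may simply take $c=b=c_0$, so that $\widetilde q=q$ and no embedding is needed; when $sp\ge2$ the theorem additionally requires $a>1$, which is compatible with $c\ge c_0$ since $\mathbf{a_K}>1$, at the only cost of passing through the embedding when $c_0\le1$ (in particular in the borderline $sp=2$, where $c_0=0$, $\widetilde q=p$, and the hypothesis is just $q<p$). The reason $c$ must remain \emph{strictly} below $\mathbf{c_K}$ is that the constants in \cite[Theorem 13.4.2]{AstalaIwaniecMartin} contain a factor $(1-c/\mathbf{c_K})^{-1}$ blowing up as $c\to\mathbf{c_K}$, and at the threshold the corresponding moment of $J_\phi$ may actually diverge (cf.\ \cite[Theorem 13.2.3]{AstalaIwaniecMartin}); hence the end-point $\frac1{q^\ast}=\frac1p+\frac1{\mathbf{c_K}}\left|\frac s2-\frac1p\right|$ is unreachable this way and must be handled separately.

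That separate treatment gives the third display when $sp<2$. There the critical moment of $J_\phi$ is lost, but the end-point H\"older bound $|\phi(B)|\ge C|B|^K$ survives for every ball, i.e.\ \rf{eqConditionBalls} with $\alpha=K$; note that $\alpha=K$ is precisely the excluded endpoint $b=\mathbf{b_K}$ seen through $\alpha=\frac{b+1}{b}$. Plugging $\alpha=K$ and $n=2$ into \cite[Theorem 1.1]{HenclKoskelaComposition} maps $B^s_{p,p}(\phi(B))$ into $B^\beta_{p,p}(B)$ with $\beta=\frac2p-K\!\left(\frac2p-s\right)$, which rearranges to $\beta=s-(K-1)\!\left(\frac2p-s\right)$; since $\phi(B)\subset\phi(2B)\subset B'$, the $\phi(B)$-norm is dominated by the $B'$-norm, and the standing hypothesis $\beta>0$ makes the target a genuine positive-smoothness Besov space. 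I expect the only delicate points to be the bookkeeping that keeps $c$ in $[c_0,\mathbf{c_K})$ while respecting $a>1$ and the embedding step, together with the conceptual observation that the endpoint missed by Theorem~\ref{theoCompositionQCSobolevFractionalMain} is recaptured \emph{geometrically}, through $|\phi(B)|\ge C|B|^K$, rather than through integrability of the Jacobian.
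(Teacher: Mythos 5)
Your proposal is correct and follows essentially the same route as the paper, which derives the first two displays from Theorem \ref{theoCompositionQCSobolevFractionalMain} with $n=2$ using the integrability of $J_\phi^a$ and $J_\phi^{-b}$ for all $a<\mathbf{a_K}$, $b<\mathbf{b_K}$ from \cite[Theorem 13.4.2]{AstalaIwaniecMartin}, and the third display from the end-point H\"older bound $|\phi(B)|\geq C|B|^K$ fed into \cite[Theorem 1.1]{HenclKoskelaComposition}. Your explicit bookkeeping with $c_0$, the strictness of the hypothesis, and the embedding $H^{s,\widetilde q}(B)\hookrightarrow H^{s,q}(B)$ on the bounded ball simply fills in details the paper leaves implicit.
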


The results in Theorem \ref{theoCompositionQCSobolevFractionalMain} and Corollary \ref{coroCompositionQCSobolevPlanar} are sharp concerning the loss of integrability. In the critical setting ($sp=n$), this fact is obvious since the composition is a self-map for these spaces. Both examples presented in \cite{HenclKoskelaComposition} illustrate the sharpness of the subcritical setting ($sp<n$), adapting the arguments above. In Section \ref{secExamples} we check that this extends to the Bessel-potential subcritical setting and we adapt one of these examples to the supercritical setting ($sp>n$). 

In conclusion, we have seen that better integrability properties of the derivatives of quasiconformal mappings imply less loss of integrability in the composition, while better H\"older regularity implies less loss of smoothness. In general, we expect the integrability properties to be an open condition for higher dimensions as it happens in dimension 2 (see \cite[Section 13.4.1]{AstalaIwaniecMartin}). In higher dimensions, it is conjectured that one can take $b<\mathbf{b_K}:=\frac{1}{K^{({1}/{n-1})}-1}$ in \rf{eqAKBKRn}. If this is the case, the H\"older condition of the inverse mapping of exponent $K^{-1/(n-1)}$, which coincides with the Sobolev embedding of the conjectured endpoint Sobolev space for the inverse mapping, is reached by quasisymmetry (see \cite{KoskelaNotes}), that is, a closed condition for the loss of smoothness. It remains to see if this extends to the supercritical case and what happens in the fractional spaces between both end-points. We believe that  these techniques can be applied to the finite distortion setting described in \cite{Kleprlik}.

The paper is structured as follows. In Section \ref{secClassical} we revisit the proofs of the loss of integrability in the classical Lebesgue and Sobolev spaces. In Section \ref{secFractional}, we derive the proof of Theorem \ref{theoCompositionQCSobolevFractionalMain} by a subtle interpolation of those classical results. Finally, in Section \ref{secExamples}, we check the sharpness of the main theorem.

\section{Classical spaces}\label{secClassical}
\subsection{Composition in Lebesgue spaces}\label{secLebesgue}
\begin{definition}
Let $ 1\leq K<\infty$ and let $\Omega, \Omega'$ be two domains in $\R^n$. We say that a homeomorphism $\phi:\Omega \to \Omega'$ is a $K$-quasiconformal mapping if  $\phi\in W^{1,1}_{loc}(\Omega)$, and the distributional Jacobian matrix $D\phi$ satisfies the distortion inequality
$$|D\phi(x)|^n \leq K |J_\phi(x)| \mbox{\quad\quad a.e. }x\in\Omega,$$
where  $J_\phi$ stands for the Jacobian determinant of $\phi$ and $|A|=\sup_{|h|=1}|A\cdot h|$ stands for the usual operator norm of a matrix $A$.
\end{definition}

 We recall some properties of $K$-quasiconformal mappings. 
\begin{theorem}[see {\cite[Remark 6.1, Theorem 6.3]{KoskelaNotes}}]\label{theoChangeRn}
Let $\phi:\Omega \to \Omega'$ be a $K$-quasiconformal mapping between planar domains. Then
\begin{enumerate}
\item $\phi\in W^{1,p}_{loc}$ for some $p>n$ depending on $K$.
\item Either $J_\phi>0$ almost everywhere or $J_\phi<0$ almost everywhere.
\item For $E\subset \Omega$ measurable, $|E|=0$ if and only if $|\phi(E)|=0$.
\item Given a ball $B\subset 2B\subset\Omega$ and a measurable set  $E\subset B$, there are constants $C$ and $\alpha$ depending on $n$ and $K$ such that 
$$\frac{|\phi(E)|}{|\phi(B)|}\leq C \left(\frac{|E|}{|B|}\right)^\alpha.$$
\item $\phi^{-1}$ is a $K^{n-1}$-quasiconformal mapping.
\item Moreover, given $f \in L^1(\Omega')$, we have that $f\circ\phi \, |J_{\phi}| \in L^1(\Omega)$ with
\begin{equation}\label{eqChainRule}
\int_\Omega f(\phi(z))|J_\phi(z)| \, dm(z) = \int_{\Omega'} f(w)\,dm(w).
\end{equation}
\end{enumerate}
\end{theorem}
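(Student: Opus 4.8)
The six assertions are classical facts about quasiconformal homeomorphisms, and the plan is to organize them so that the purely algebraic statements come first, the analytically deepest one (the higher integrability in (1)) is isolated as the main step, and the remaining items follow as corollaries of it together with the area formula.

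I would begin with the two pointwise facts. For (5), at a point of differentiability write $A=D\phi(x)$ with singular values $\sigma_1\geq\cdots\geq\sigma_n>0$; the distortion inequality says that the outer distortion $K_O(A):=|A|^n/|\det A|=\sigma_1^n/\prod_i\sigma_i$ satisfies $K_O(A)\leq K$. Since $D\phi^{-1}(\phi(x))=A^{-1}$ has operator norm $\sigma_n^{-1}$ and $|\det A^{-1}|=(\prod_i\sigma_i)^{-1}$, one computes $K_O(A^{-1})=(\prod_i\sigma_i)/\sigma_n^n=:K_I(A)$, and the elementary inequality $K_I(A)\leq K_O(A)^{n-1}$ (which, after clearing denominators, reduces to $\sigma_1\cdots\sigma_{n-1}\leq\sigma_1^{n-1}$ and hence to $\sigma_i\leq\sigma_1$) gives $|A^{-1}|^n\leq K^{n-1}|J_{\phi^{-1}}|$; thus $\phi^{-1}$ is $K^{n-1}$-quasiconformal. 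For (2) I would use that the Jacobian of a Sobolev homeomorphism has a.e.\ constant sign; since the distortion inequality forces $D\phi=0$ wherever $J_\phi=0$, and a quasiconformal map is differentiable a.e.\ with non-degenerate derivative, the Jacobian cannot vanish on a set of positive measure, so $J_\phi>0$ a.e.\ or $J_\phi<0$ a.e.

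The analytic core is (1). Starting from $\phi\in W^{1,n}_{loc}$ (part of the quasiconformal package), I would establish a reverse Hölder inequality $\big(\fint_B|D\phi|^n\big)^{1/n}\leq C\big(\fint_{2B}|D\phi|^{q}\big)^{1/q}$ for some $q<n$, obtained by bounding $\fint_B|D\phi|^n\leq K\fint_B J_\phi=K|\phi(B)|/|B|$ through the area formula and then controlling $|\phi(B)|$ by the oscillation of $\phi$ via a Sobolev--Poincaré-type inequality. Gehring's lemma upgrades this self-improving estimate to $|D\phi|\in L^p_{loc}$ for some $p=p(n,K)>n$. This is the main obstacle: it is the one genuinely nontrivial step, and I would invoke Gehring's lemma as a black box rather than reprove it.

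The remaining items are consequences. Assertion (4) follows from (1) and the area formula by Hölder: using the elementary bound $|J_\phi|\leq|D\phi|^n$ one has $|\phi(E)|=\int_E|J_\phi|\leq\int_E|D\phi|^n\leq|E|^{1-n/p}\big(\int_{B}|D\phi|^p\big)^{n/p}$, and normalizing by $|B|$ and $|\phi(B)|$ yields the estimate with $\alpha=1-n/p$. Finally, both (3) and (6) rest on Luzin's condition (N) and the area formula for Sobolev homeomorphisms: since $\phi\in W^{1,p}_{loc}$ with $p>n$ it satisfies (N), giving $|E|=0\Rightarrow|\phi(E)|=0$, and applying the same to $\phi^{-1}$ (quasiconformal by (5)) yields the converse, proving (3). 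The area formula then reads $\int_\Omega g\,|J_\phi|=\int_{\Omega'}g\circ\phi^{-1}$ for nonnegative measurable $g$; substituting $g=f\circ\phi$ and using that $\phi$ is a bijection gives \rf{eqChainRule} first for nonnegative $f$ and then, by decomposition into positive and negative parts, for all $f\in L^1(\Omega')$, which is (6).
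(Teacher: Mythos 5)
The paper does not actually prove this theorem: it is imported verbatim from Koskela's lecture notes (\cite[Remark 6.1, Theorem 6.3]{KoskelaNotes}), so there is no internal argument to compare against. Judged on its own terms, your sketch follows the standard development of the quasiconformal package --- Gehring's lemma for the higher integrability in (1), condition (N) and the area formula for (3), (4) and (6) --- and the individual computations you give (the singular-value inequality $K_I(A)\leq K_O(A)^{n-1}$ for (5), the H\"older estimate for (4)) are correct as algebra.

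The genuine problem is the logical ordering. You present (2) and (5) as elementary pointwise preliminaries, but both secretly rest on the analytic core. For (2), the phrase ``a quasiconformal map is differentiable a.e.\ with non-degenerate derivative'' is precisely the nontrivial content of the claim: ruling out $|\{J_\phi=0\}|>0$ is normally done by observing $|\phi(\{J_\phi=0\})|\leq\int_{\{J_\phi=0\}}J_\phi=0$ and then invoking condition (N) for $\phi^{-1}$, which requires higher integrability \emph{of the inverse}, hence (5), hence the rest of the machinery; as written your argument assumes what it must prove. For (5), the computation is valid at a point where the chain rule $D\phi^{-1}(\phi(x))=(D\phi(x))^{-1}$ holds, but to conclude that $\phi^{-1}$ is quasiconformal in the sense of the paper's definition you must first know that $\phi^{-1}\in W^{1,1}_{loc}$ and that almost every point of $\Omega'$ is the image of a point of differentiability with $J_\phi\neq0$; the Sobolev regularity of the inverse is itself a theorem, not an algebraic identity. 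Finally, in (4) the passage from $\bigl(\int_B|D\phi|^p\bigr)^{n/p}$ to a bound by $|\phi(B)|\,|B|^{n/p-1}$ needs the quantitative reverse H\"older inequality on $2B$ together with the doubling estimate $|\phi(2B)|\leq C|\phi(B)|$ (this is what the hypothesis $2B\subset\Omega$ is for), and you should say so. None of these facts is in doubt --- they are all in the cited notes --- but the proposed ordering (2), (5), (1), then the rest, cannot be carried out as stated; the standard route establishes quasisymmetry and higher integrability for both $\phi$ and $\phi^{-1}$ first and only then derives (2), (3) and (5).
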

From now on we assume $J_\phi >0$ almost everywhere.
\begin{corollary}\label{coroIntegrability}
 There exist  $a=a_K>1$ and $b=b_K>0$ such that for every $K$-quasiconformal mapping $\phi:\Omega \to \Omega'$ and every measurable set $U$ contained in a ball $B$ with $2B\subset \Omega$, there exist constants $C_a$, $C_b$ depending on $K$, $a$ (resp. $b$), $|B|$ and $|\phi(B)|$,  such that \rf{eqAKBKRn} holds.
\end{corollary}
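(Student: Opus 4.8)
The plan is to deduce both estimates in \rf{eqAKBKRn} from the measure-distortion bound in part (4) of Theorem \ref{theoChangeRn}, which---once combined with the area formula in part (6)---encodes the higher integrability of $J_\phi$. The first estimate will come from $\phi$ itself, and the second from applying the same reasoning to $\phi^{-1}$, which is again quasiconformal by part (5). Throughout I may assume the exponent $\alpha=\alpha(n,K)$ from part (4) satisfies $\alpha\in(0,1)$, since the inequality there only weakens when $\alpha$ is decreased.

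First I would record that the area formula (6), applied to $f=\chi_{\phi(E)}$, gives $|\phi(E)|=\int_E J_\phi$ for every measurable $E\subset\Omega$, so that part (4) reads $\int_E J_\phi \le C\,|\phi(B)|\,(|E|/|B|)^\alpha$ for all $E\subset B$. To obtain the first bound I would test this on the super-level sets $E_\lambda:=\{x\in B: J_\phi(x)>\lambda\}$. Since trivially $\int_{E_\lambda}J_\phi\ge \lambda|E_\lambda|$, combining the two inequalities yields the distribution estimate $|E_\lambda|\le C\,(|\phi(B)|\,|B|^{-\alpha})^{1/(1-\alpha)}\,\lambda^{-1/(1-\alpha)}$. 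Feeding this into the layer-cake formula $\int_B J_\phi^{a}=a\int_0^\infty \lambda^{a-1}|E_\lambda|\,d\lambda$, and splitting the integral at the threshold where the trivial bound $|E_\lambda|\le|B|$ takes over, shows that the integral converges for every $a<\tfrac1{1-\alpha}$, with a bound depending only on $a$, $\alpha$, $|B|$ and $|\phi(B)|$. Fixing one such $a=a_K>1$ gives the first inequality.

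For the second bound I would pass to the inverse. By part (5), $\phi^{-1}$ is $K^{n-1}$-quasiconformal, so the argument above applies to it and produces an exponent $a'=a_{K^{n-1}}>1$; set $b:=a'-1>0$. The change of variables $w=\phi(x)$ together with $J_{\phi^{-1}}(\phi(x))=J_\phi(x)^{-1}$ turns the negative moment into a positive one,
\[
\int_U J_\phi(x)^{-b}\,dx=\int_{\phi(U)} J_{\phi^{-1}}(w)^{b+1}\,dw,
\]
which is exactly the $a'$-th moment of $J_{\phi^{-1}}$ over $\phi(U)\subset\phi(B)$. Applying the first part of the proof to $\phi^{-1}$ on a ball containing $\phi(B)$ then closes the estimate.

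The main obstacle is precisely this last geometric step: the higher-integrability bound for $\phi^{-1}$ is a statement on balls $B'$ with $2B'\subset\Omega'$, whereas $\phi(B)$ need not sit inside any such ball if it comes close to $\partial\Omega'$. The point that makes this work is that $\overline{\phi(B)}=\phi(\overline B)$ is a compact subset of the open set $\Omega'$, because $\overline B\subset 2B\subset\Omega$. One then covers $\overline{\phi(B)}$ by finitely many balls $B'_i$ with $2B'_i\subset\Omega'$ of radius comparable to their distance to $\partial\Omega'$, applies the ball estimate for $\phi^{-1}$ on each $B'_i$, and sums; bookkeeping of the constants shows the resulting $C_b$ depends only on $K$, $b$, $|B|$ and $|\phi(B)|$. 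A cleaner route, available whenever a single ball $B'$ with $\phi(B)\subset B'\subset 2B'\subset\Omega'$ exists (as is the case under the hypotheses of Theorem \ref{theoCompositionQCSobolevFractionalMain}), is simply to apply the first part directly to $\phi^{-1}$ on $B'$.
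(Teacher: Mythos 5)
Your proof is correct, but the first half takes a genuinely different route from the paper. For the bound on $\int_U J_\phi^{a}$ the paper argues in one line from property (1) of Theorem \ref{theoChangeRn}: since $J_\phi \le |D\phi|^n$ pointwise, one may take $a_K=p/n>1$ where $p>n$ is the local higher-integrability exponent of the derivative, so that $\int_B J_\phi^{a_K}\le \int_B|D\phi|^p\le C_a$. You instead rederive the higher integrability of the Jacobian from scratch out of the measure-distortion estimate (4) combined with the area formula (6), via the weak-type bound $|E_\lambda|\lesssim \lambda^{-1/(1-\alpha)}$ and the layer-cake formula, obtaining integrability for every $a<\frac{1}{1-\alpha}$. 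Both arguments are sound and both produce an exponent depending only on $n$ and $K$, which is all the corollary asserts; the paper's is shorter because it imports the higher integrability of $D\phi$ wholesale, whereas yours is more self-contained (it uses only the measure-theoretic properties (4) and (6)) at the cost of a Cavalieri computation. For the negative moment the two proofs coincide: the change of variables $\int_U J_\phi^{-b}=\int_{\phi(U)}J_{\phi^{-1}}^{b+1}$ followed by the first estimate applied to the $K^{n-1}$-quasiconformal map $\phi^{-1}$, with $b_K=a_{K^{n-1}}-1$. You are right --- and more careful than the paper, which passes over this point silently --- that $\phi(B)$ need not sit inside a ball whose double is contained in $\Omega'$, and the finite covering of the compact set $\phi(\overline{B})$ repairs this; the only over-claim is that the resulting $C_b$ depends solely on $K$, $b$, $|B|$ and $|\phi(B)|$, since the number and size of the covering balls also involve ${\rm dist}(\phi(\overline{B}),\partial\Omega')$, but the paper's own statement of the constant dependence is equally loose and nothing later in the paper relies on it.
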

\begin{proof}
Take $a_K=\frac{p}{n}$ from  the first property in the theorem above. 
$$\int_B J_\phi (x)^{a_K} \, dm(x) \leq  \norm{D \phi}_{L^{p}(B)} \leq C_{a}.$$
Take $b_K=a_{K^{n-1}}-1$ and the last item above:
\begin{equation}\label{eqChangeInB}
\int_B J_\phi (x)^{-b_K} \, dm(x)=\int_B J_\phi (x) J_\phi (x)^{-b_K-1} \, dm(x)=\int_{\phi(B)} J_{\phi^{-1}} (x)^{b_K+1} \, dm(x)\leq C_b .
\end{equation}
By the fifth property, the corollary follows.
\end{proof}

Next we see how does interact the composition operator with Lebesgue spaces. 

\begin{figure}[ht]
\centering
{
\begin{tikzpicture}[line cap=round,line join=round,>=triangle 45,x=4.5cm,y=1.5cm]
\draw[->,color=cqcqcq] (-0.1830204139167261,0.) -- (1.2429300847367257,0.);
\foreach \x in {,0.2,0.4,0.6,0.8,1.,1.2}
\draw[shift={(\x,0)},color=cqcqcq] (0pt,2pt) -- (0pt,-2pt);
\draw[color=cqcqcq] (0.,-0.8) -- (0.,0.8);
\foreach \y in {0}
\draw[shift={(0,\y)},color=cqcqcq] (2pt,0pt) -- (-2pt,0pt);
\clip(-0.1830204139167261,-0.8) rectangle (1.2429300847367257,0.8);
\fill[color=qqttcc,fill=qqttcc,fill opacity=0.5] (0.2618443656074706,0.) -- (0.2618443656074706,0.) -- (0.2618443656074706,0.) -- cycle;
\fill[color=qqttcc,fill=qqttcc,fill opacity=0.25] (0.6982516416199216,0.) -- (0.6982516416199216,0.) -- (0.6982516416199216,0.) -- cycle;
\draw [color=cqcqcq] (1.,-0.8892264630311122) -- (1.,3.6269454505314855);
\draw [color=cqcqcq,domain=-0.1830204139167261:1.2429300847367257] plot(\x,{(-1.-0.*\x)/-1.});
\draw (0.19,0.4) node[anchor=north west] {$L^p$};
\draw (0.63,0.4) node[anchor=north west] {$L^q$};
\draw (0.2618443656074706,0.)-- (0.2618443656074706,0.);
\draw (0.6982516416199216,0.)-- (0.6982516416199216,0.);
\draw (0.2618443656074706,0.)-- (0.2618443656074706,0.);
\draw (0.6982516416199216,0.)-- (0.6982516416199216,0.);
\draw [color=qqttcc] (0.2618443656074706,0.)-- (0.2618443656074706,0.);
\draw [color=qqttcc] (0.2618443656074706,0.)-- (0.2618443656074706,0.);
\draw [color=qqttcc] (0.2618443656074706,0.)-- (0.2618443656074706,0.);
\draw [color=qqttcc] (0.6982516416199216,0.)-- (0.6982516416199216,0.);
\draw [color=qqttcc] (0.6982516416199216,0.)-- (0.6982516416199216,0.);
\draw [color=qqttcc] (0.6982516416199216,0.)-- (0.6982516416199216,0.);
\draw (0.19,0.05) node[anchor=north west] {$\frac1p$};
\draw (0.67,0.05) node[anchor=north west] {$\frac1q$};
\draw [->,thick,color=ffqqqq] (0.2618443656074706,0.) -- (0.6982516416199216,0.);
\draw [color=ffqqqq] (0.43,0.05) node[anchor=north west] {$\frac1{bp}$};
\draw [->,thick,color=qqqqff] (0,0) -- (0.2618443656074706,0.);
\begin{scriptsize}
\draw [fill=ffqqqq] (0.2618443656074706,0.) circle (0.5pt);
\draw [fill=ffqqqq] (0.6982516416199216,0.) circle (0.5pt);
\draw [fill=ffqqqq] (0.2618443656074706,0.) circle (0.5pt);
\draw [fill=ffqqqq] (0.6982516416199216,0.) circle (0.5pt);
\draw [fill=ffqqqq] (0.2618443656074706,0.) circle (0.5pt);
\draw [fill=ffqqqq] (0.6982516416199216,0.) circle (0.5pt);
\draw [fill=ffqqqq] (0.,0.) circle (0.5pt);
\end{scriptsize}
\end{tikzpicture}
}
\caption{Action of the composition operator described in Lemma \ref{lemCompositionQCLebegueRn}. The gap (loss of integrability) is proportional to $\frac1p$.}\label{figActionInLebesgue}
\end{figure}
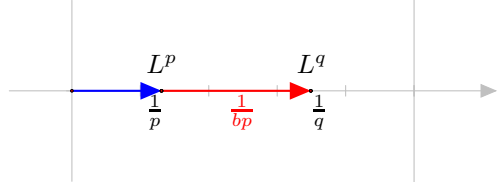

\begin{lemma}\label{lemCompositionQCLebegueRn}
 Let $n\geq 2$, $K\geq 1$ and $0<p \leq \infty$. Given a $K$-quasiconformal mapping $\phi:\Omega \to \Omega'$ between two domains in $\R^{n}$  and a function $f \in L^p(\Omega')$, let $b$ and $C_b$ satisfy \rf{eqAKBKRn} for $U=\Omega$ and let $q$ be defined as
$\frac1q = \frac1p + \frac{1}{b}\frac1p$. We have that
 $$\norm{f \circ \phi}_{L^q(\Omega)}\leq C_b^{\frac1p} \norm{f}_{L^p(\Omega')}.$$
\end{lemma}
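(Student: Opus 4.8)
The plan is to reduce everything to the change-of-variables identity \rf{eqChainRule} by inserting an artificial power of the Jacobian and then peeling off the negative power with Hölder's inequality. I would first dispose of the main range $0<p<\infty$. The key observation is that the definition $\frac1q=\frac1p+\frac1b\frac1p$ gives $q=\frac{b}{b+1}\,p<p$, so there is genuinely a loss of integrability, and moreover the two exponents
$$r_1:=\frac pq \qquad\text{and}\qquad r_2:=\frac{bp}{q}=b+1$$
are Hölder conjugate: $\frac1{r_1}+\frac1{r_2}=\frac qp+\frac q{bp}=q\left(\frac1p+\frac1{bp}\right)=q\cdot\frac1q=1$. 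Both exceed $1$ because $q<p$ and $b>0$, so Hölder's inequality is legitimate across the whole range, including $p\le1$.

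With this arithmetic settled, I would write, for $z\in\Omega$,
$$|f(\phi(z))|^q=\Big(|f(\phi(z))|^p\,J_\phi(z)\Big)^{q/p}\,J_\phi(z)^{-q/p},$$
and apply Hölder with exponents $r_1=\frac pq$ and $r_2=b+1$ to the two factors. Raising the first factor to $r_1$ and integrating gives exactly $\int_\Omega|f(\phi(z))|^p\,J_\phi(z)\,dm(z)$, which by \rf{eqChainRule} applied to $|f|^p$ equals $\norm{f}_{L^p(\Omega')}^p$. Raising the second factor to $r_2$ gives $\int_\Omega J_\phi(z)^{-b}\,dm(z)$, since $\frac qp\,r_2=\frac qp(b+1)=b$; this integral is controlled by $C_b^{\,b}$ via the second estimate in \rf{eqAKBKRn} with $U=\Omega$. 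Collecting the exponents,
$$\int_\Omega |f\circ\phi|^q\,dm\le\Big(\norm{f}_{L^p(\Omega')}^p\Big)^{q/p}\Big(C_b^{\,b}\Big)^{q/(bp)}=\norm{f}_{L^p(\Omega')}^q\,C_b^{\,q/p},$$
and taking $q$-th roots yields the claimed bound $\norm{f\circ\phi}_{L^q(\Omega)}\le C_b^{1/p}\norm{f}_{L^p(\Omega')}$. (Here I only take $q$-th roots of the integral at the very end, so no quasi-norm subtleties arise when $q\le1$.)

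For the endpoint $p=\infty$ the formula forces $\frac1q=0$, i.e.\ $q=\infty$, and $C_b^{1/p}=1$. In this case the inequality is immediate: $\phi$ is a homeomorphism and, by item (3) of Theorem \ref{theoChangeRn}, preserves null sets in both directions, so the essential supremum is unchanged and $\norm{f\circ\phi}_{L^\infty(\Omega)}=\norm{f}_{L^\infty(\Omega')}$.

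I do not expect a real obstacle in this lemma; the entire content lies in choosing the weight $J_\phi^{q/p}$ so that one Hölder factor matches precisely the Jacobian weight demanded by \rf{eqChainRule}, while the conjugate exponent converts the leftover $J_\phi^{-q/p}$ into exactly the power $J_\phi^{-b}$ appearing in \rf{eqAKBKRn}. The only delicate point worth a line is verifying $r_1,r_2\ge1$, which the computation above confirms; this is what makes the argument valid uniformly for $0<p\le\infty$.
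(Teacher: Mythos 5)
Your proposal is correct and is essentially the paper's own argument: the paper inserts $J_\phi^{1/p}J_\phi^{-1/p}$ and applies the generalized H\"older inequality $\norm{gh}_{L^q}\le\norm{g}_{L^p}\norm{h}_{L^{bp}}$ together with the change of variables \rf{eqChainRule} and the second bound in \rf{eqAKBKRn}, which is exactly your computation written at the level of norms rather than of the integrand. Your explicit verification that the conjugate exponents $p/q$ and $b+1$ exceed $1$ (so the argument is valid even when $q\le 1$) and your treatment of $p=\infty$ are details the paper leaves implicit, but the route is the same.
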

\begin{proof}
The case $p=\infty$ being trivial, let us assume that $p<\infty$. By the H\"older inequality
\begin{align*}
 \norm{f \circ \phi}_{L^q (\Omega)}
 	& =  \norm{f \circ \phi  \, J_\phi ^\frac1p J_\phi^{-\frac1p} }_{L^q (\Omega)}\\
	& \leq  \left(\int_{\Omega}| f \circ \phi(x)|^p J_\phi(x) \, dm(x)\right)^\frac1p \left(\int_{\Omega} J_\phi(x)^{-b} \, dm(x)\right)^{\frac1b\frac1p}.
\end{align*}
Note that $\phi$ acts as a change of variables by Theorem \ref{theoChangeRn}, so
\begin{align*}
 \norm{f \circ \phi}_{L^q (\Omega)}
	& \leq C_b^{\frac1p} \norm{f}_{L^p(\phi(\Omega))} .
\end{align*}
\end{proof}

Let us write $B_1$ for the ball with radius $1$ centered at the origin. There exists a bump function $\varphi\in C^\infty_c(2B_1)$, such that $\chi_{B_1}\leq \varphi \leq \chi_{2B_1}$. For every ball $B$ we define $\varphi_B$ by precomposing $\varphi$ with an appropriate affine mapping, so that $\chi_{B}\leq \varphi_B \leq \chi_{2B}$.
\begin{definition}
Let $\phi:\Omega\to\Omega'$ be a $K$-quasiconformal mapping. Then, the composition operator $T_\phi:  L^{1+\frac1{b_K}}_{loc}(\Omega') \to L^1_{loc}(\Omega)$ is defined as
$$ T_\phi f := f\circ \phi.$$
Moreover, for every ball $B$ with $2B\subset \Omega$, we define
$$ T_\phi^B f := \varphi_B \cdot f\circ \phi.$$
\end{definition}

\begin{corollary}\label{coroCompositionLebesgueRn}
 Let $K\geq 1$ and $0<p \leq \infty$. Given a  $K$-quasiconformal mapping $\phi:\Omega \to \Omega'$ and $f \in L^p(\Omega')$. Let $B$ be a ball with $2B\subset \Omega$, and let $b$ and $C_b$ satisfy \rf{eqAKBKRn} for $U=2B$. If $\frac1q = \frac1p + \frac1{pb} $, then
 $$\norm{T_\phi^B}_{L^p \to L^q} \leq   C_b^{\frac1p}.$$
\end{corollary}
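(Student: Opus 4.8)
The plan is to reduce Corollary \ref{coroCompositionLebesgueRn} to the already-proved Lemma \ref{lemCompositionQCLebegueRn} by localizing the composition operator using the bump function $\varphi_B$. The key observation is that $T_\phi^B f = \varphi_B \cdot (f\circ\phi)$ differs from $f\circ\phi$ only through multiplication by a cutoff supported in $2B$, and that $\varphi_B$ is bounded by $1$ pointwise. So the estimate for $T_\phi^B$ should follow from the global estimate once we restrict attention to the set where $\varphi_B$ lives.

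First I would write, using $0\le\varphi_B\le\chi_{2B}$,
\begin{equation*}
\norm{T_\phi^B f}_{L^q(\Omega)} = \norm{\varphi_B \cdot (f\circ\phi)}_{L^q(\Omega)} \leq \norm{f\circ\phi}_{L^q(2B)}.
\end{equation*}
Next I would apply Lemma \ref{lemCompositionQCLebegueRn} with the domain $\Omega$ replaced by $2B$ (so that the restriction $\phi|_{2B}:2B\to\phi(2B)$ is itself a $K$-quasiconformal mapping between domains, and \rf{eqAKBKRn} is assumed to hold for $U=2B$). This yields
\begin{equation*}
\norm{f\circ\phi}_{L^q(2B)} \leq C_b^{1/p}\,\norm{f}_{L^p(\phi(2B))} \leq C_b^{1/p}\,\norm{f}_{L^p(\Omega')},
\end{equation*}
with $q$ defined exactly by $\frac1q=\frac1p+\frac1{pb}$ as in the hypothesis. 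Combining the two displays and taking the supremum over $f$ with $\norm{f}_{L^p}\le 1$ gives $\norm{T_\phi^B}_{L^p\to L^q}\le C_b^{1/p}$, which is the claim.

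The only genuinely nontrivial point — and the step I would be most careful about — is verifying that Lemma \ref{lemCompositionQCLebegueRn} applies verbatim to the restricted map on $U=2B$. The lemma is stated for a quasiconformal $\phi$ between two domains with \rf{eqAKBKRn} on the whole domain, whereas here \rf{eqAKBKRn} is only assumed on $U=2B$; but since the integration is confined to $2B$ by the cutoff, only the local hypothesis is used, and the change-of-variables identity \rf{eqChainRule} together with the Hölder splitting $J_\phi^{1/p}J_\phi^{-1/p}$ go through on $2B$ unchanged. The case $p=\infty$ is immediate since then $q=\infty$ and $\norm{\varphi_B\cdot(f\circ\phi)}_{L^\infty}\le\norm{f}_{L^\infty}$. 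I would note that no regularity of $\varphi_B$ beyond its pointwise bound is needed here, so the proof is essentially a one-line localization of the preceding lemma.
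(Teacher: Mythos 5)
Your proposal is correct and follows exactly the route the paper intends: the corollary is stated without proof precisely because it is the immediate localization of Lemma \ref{lemCompositionQCLebegueRn}, using $0\leq\varphi_B\leq\chi_{2B}$ to reduce to the lemma applied on $U=2B$, where \rf{eqAKBKRn} is assumed. Your care about restricting the lemma's hypothesis from the whole domain to $2B$ is exactly the (minor) point that makes the reduction legitimate.
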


\subsection{Composition in classical Sobolev spaces}\label{secSobolev}
In this section we study the behavior of the composition operator in Sobolev spaces $W^{1,p}$ with $1\leq p \leq \infty$.
The following is partially contained in \cite[Theorems 1.1 and 1.2]{HenclKoskela}. In that case only the critical and supercritical cases are covered.

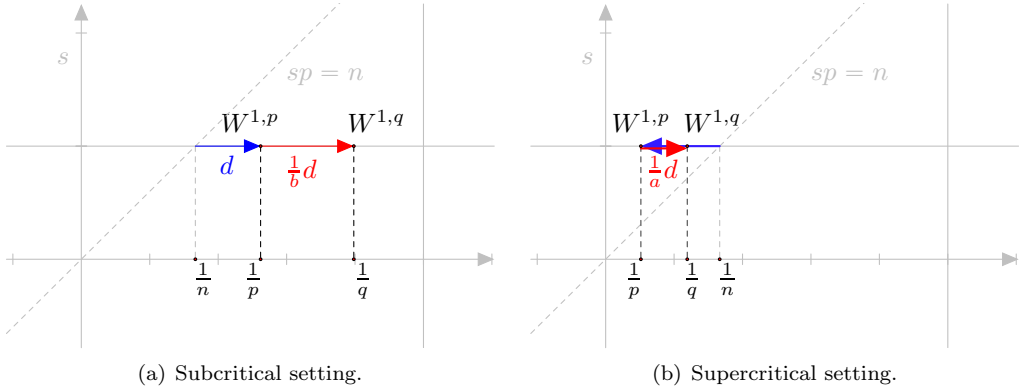
\begin{figure}[ht]
\centering
\subfigure[Subcritical setting.]
{
\begin{tikzpicture}[line cap=round,line join=round,>=triangle 45,x=4.5cm,y=1.5cm]
\draw[->,color=cqcqcq] (-0.21876847655706336,0.) -- (1.2071820220963887,0.);
\foreach \x in {-0.2,0.2,0.4,0.6,0.8,1.,1.2}
\draw[shift={(\x,0)},color=cqcqcq] (0pt,2pt) -- (0pt,-2pt);
\draw[->,color=cqcqcq] (0.,-0.7819822751100998) -- (0.,2.25660304931856);
\foreach \y in {,1.,2.}
\draw[shift={(0,\y)},color=cqcqcq] (2pt,0pt) -- (-2pt,0pt);
\clip(-0.21876847655706336,-0.7819822751100998) rectangle (1.2071820220963887,2.25660304931856);
\draw [color=cqcqcq] (1.,-0.7819822751100998) -- (1.,2.25660304931856);
\draw [color=cqcqcq,domain=-0.21876847655706336:1.2071820220963887] plot(\x,{(-1.-0.*\x)/-1.});
\draw (0.38,1.4) node[anchor=north west] {$W^{1,p}$};
\draw (0.75,1.4) node[anchor=north west] {$W^{1,q}$};
\draw [dash pattern=on 2pt off 2pt,color=cqcqcq,domain=-0.21876847655706336:1.2071820220963887] plot(\x,{(-0.--3.*\x)/1.});
\draw [dash pattern=on 2pt off 2pt] (0.5239968249699436,1.)-- (0.5239968249699436,0.);
\draw [dash pattern=on 2pt off 2pt] (0.7963732415936726,1.)-- (0.7963732415936726,0.);
\draw [dash pattern=on 2pt off 2pt,color=cqcqcq] (0.3333,1.)-- (0.3333,0.);
\draw [color=cqcqcq](0.5716609084903933,1.7680461932339522) node[anchor=north west] {$sp=n$};
\draw [color=cqcqcq](-0.09960826775593923,1.911038443795301) node[anchor=north west] {$s$};
\draw [->,color=ffqqqq] (0.5239968249699436,1.) -- (0.7963732415936726,1.);
\draw [color=ffqqqq](0.57,1.) node[anchor=north west] {$\frac1{b} d$};
\draw (0.3,0.05) node[anchor=north west] {$\frac1n$};
\draw (0.45,0.05) node[anchor=north west] {$\frac1p$};
\draw (0.77,0.05) node[anchor=north west] {$\frac1q$};
\draw [color=qqqqff](0.37703256744855723,1.) node[anchor=north west] {$d$};
\draw [->,color=qqqqff] (0.3333333333333333,1.) -- (0.5239968249699436,1.);
\begin{scriptsize}
\draw [fill=ffqqqq] (0.5239968249699436,1.) circle (0.5pt);
\draw [fill=ffqqqq] (0.7963732415936726,1.) circle (0.5pt);
\draw [fill=ffqqqq] (0.3333,0.) circle (0.5pt);
\draw [fill=ffqqqq] (0.5239968249699436,0.) circle (0.5pt);
\draw [fill=ffqqqq] (0.7963732415936726,0.) circle (0.5pt);
\end{scriptsize}
\end{tikzpicture}
}\,\,
\subfigure[Supercritical setting.]
{
\begin{tikzpicture}[line cap=round,line join=round,>=triangle 45,x=4.5cm,y=1.5cm]
\draw[->,color=cqcqcq] (-0.21876847655706336,0.) -- (1.2071820220963887,0.);
\foreach \x in {-0.2,0.2,0.4,0.6,0.8,1.,1.2}
\draw[shift={(\x,0)},color=cqcqcq] (0pt,2pt) -- (0pt,-2pt);
\draw[->,color=cqcqcq] (0.,-0.7819822751100998) -- (0.,2.25660304931856);
\foreach \y in {,1.,2.}
\draw[shift={(0,\y)},color=cqcqcq] (2pt,0pt) -- (-2pt,0pt);
\clip(-0.21876847655706336,-0.7819822751100998) rectangle (1.2071820220963887,2.25660304931856);
\draw [color=cqcqcq] (1.,-0.7819822751100998) -- (1.,2.25660304931856);
\draw [color=cqcqcq,domain=-0.21876847655706336:1.2071820220963887] plot(\x,{(-1.-0.*\x)/-1.});
\draw (-0.01,1.4) node[anchor=north west] {$W^{1,p}$};
\draw (0.2,1.4) node[anchor=north west] {$W^{1,q}$};
\draw [dash pattern=on 2pt off 2pt,color=cqcqcq,domain=-0.21876847655706336:1.2071820220963887] plot(\x,{(-0.--3.*\x)/1.});
\draw [dash pattern=on 2pt off 2pt,color=grisfosc](0.10296408720597178,1.)-- (0.10296408720597178,0.);
\draw [dash pattern=on 2pt off 2pt,color=grisfosc](0.23847540845736093,1.)-- (0.23847540845736093,0.);
\draw [dash pattern=on 2pt off 2pt,color=cqcqcq] (0.3333,1.)-- (0.3333,0.);
\draw  [color=cqcqcq](0.5716609084903933,1.7680461932339522) node[anchor=north west] {$sp=n$};
\draw [color=cqcqcq](-0.09960826775593923,1.911038443795301) node[anchor=north west] {$s$};
\draw [->,color=blau, thick] (0.3333333333333333,1.) -- (0.10296408720597178,1.);
\draw [->,color=ffqqqq, thick] (0.10296408720597178,0.98) -- (0.23847540845736093,0.98);
\draw [color=ffqqqq](0.09,1.) node[anchor=north west] {$\frac1a d$};
\draw (0.03,0.05) node[anchor=north west] {$\frac1p$};
\draw (0.2,0.05) node[anchor=north west] {$\frac1q$};
\draw (0.3,0.05) node[anchor=north west] {$\frac1n$};
\begin{scriptsize}
\draw [fill=ffqqqq] (0.10296408720597178,1.) circle (0.5pt);
\draw [fill=ffqqqq] (0.23847540845736093,1.) circle (0.5pt);
\draw [fill=ffqqqq] (0.3333,0.) circle (0.5pt);
\draw [fill=ffqqqq] (0.10296408720597178,0.) circle (0.5pt);
\draw [fill=ffqqqq] (0.23847540845736093,0.) circle (0.5pt);
\end{scriptsize}
\end{tikzpicture}
}
\caption{Action of the composition operator described in Theorem \ref{theoCompositionQCSobolev}. The gap (loss of integrability) is proportional to $d$, i.e. to the horizontal distance to the critical line (which is the homogeneity of the space seminorm under rescaling divided by $n$).}\label{figActionInSobolev}
\end{figure}

Next we present a particular case of \cite[Theorem 1.3]{Kleprlik}, which we prove for the sake of completeness and to keep control on the constants.
\begin{theorem}\label{theoCompositionQCSobolev}
 Let $n\ge 2$, $K\geq 1$ and $1<p \leq \infty$. Given a $K$-quasiconformal mapping $\phi:\Omega \to \Omega'$ between two domains in $\R^{n}$ and a function $f \in W^{1,p}(\Omega')$, let $a$, $b$, $C_a$ and $C_b$ be real numbers satisfying \rf{eqAKBKRn}  for $U=\Omega$ and let $q$ be defined as follows:
 \begin{itemize}
 \item If $n<p\leq \infty$, $\frac1q = \frac1p + \frac1{a}\left(\frac1n-\frac1p\right)$.
 \item If $p=n$, $\frac1q = \frac1p$.
 \item If $1+\frac{n-1}{nb +1} \leq p<n$, $\frac1q = \frac1p + \frac{1}{b}\left(\frac1p-\frac1n\right)$
 \end{itemize}
(see Figure \ref{figActionInSobolev}). We have that
 \begin{equation}\label{eqInequalityForGradientRn}
\norm{\nabla( f \circ \phi)}_{L^q(\Omega)}\leq K^\frac1n C_c^{\left|\frac1n-\frac1p\right|}  \norm{\nabla f}_{L^p(\Omega')},
 \end{equation}
 where $c$ stands for $a$ (resp. $b$ or $1$ with $C_1=1$) if we are in the supercritical (resp.  subcritical or critical) case.
\end{theorem}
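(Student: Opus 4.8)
The plan is to reduce the estimate to a single pointwise chain-rule bound, followed by one carefully balanced application of the Hölder inequality and the change-of-variables formula \rf{eqChainRule}. First I would record the pointwise chain rule. Since $\phi$ is quasiconformal it is differentiable almost everywhere and absolutely continuous on lines, so for $f$ smooth (and then for general $f\in W^{1,p}(\Omega')$ by approximation together with properties 3 and 6 of Theorem \ref{theoChangeRn}) one has $\nabla(f\circ\phi)(x) = D\phi(x)^T (\nabla f)(\phi(x))$ for a.e.\ $x$, whence
\[
|\nabla(f\circ\phi)(x)| \le |D\phi(x)|\,|(\nabla f)(\phi(x))|.
\]
Feeding in the distortion inequality $|D\phi(x)|^n \le K\,J_\phi(x)$ gives the fundamental pointwise bound
\[
|\nabla(f\circ\phi)(x)| \le K^{1/n}\, J_\phi(x)^{1/n}\,|(\nabla f)(\phi(x))| \quad\text{a.e.}
\]

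The crux is then to distribute the Jacobian weight $J_\phi^{1/n}$ so that part of it is absorbed by the change of variables and the remainder is controlled by \rf{eqAKBKRn}. I would write $J_\phi^{1/n} = J_\phi^{1/p}\cdot J_\phi^{1/n-1/p}$ and apply Hölder with exponents $p$ and $r$, where $\frac1r = \frac1q - \frac1p$. The first factor yields, by \rf{eqChainRule},
\[
\left(\int_\Omega \big(|(\nabla f)(\phi(x))|\,J_\phi(x)^{1/p}\big)^p\,dx\right)^{1/p} = \left(\int_\Omega |(\nabla f)\circ\phi|^p\, J_\phi\,dx\right)^{1/p} = \norm{\nabla f}_{L^p(\Omega')},
\]
and for $p=\infty$ this is simply $\norm{\nabla f}_{L^\infty}$ since $\phi$ is a homeomorphism. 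The exponent $r$ is chosen exactly so that $\frac1q$ matches its prescribed value in each of the three regimes.

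It remains to identify the residual weight. In the supercritical case $\frac1n-\frac1p>0$ and $\frac1r = \frac1a(\frac1n-\frac1p)$, so the second Hölder factor is $\big(\int J_\phi^{(1/n-1/p)r}\big)^{1/r} = \big(\int J_\phi^{a}\big)^{1/r} \le C_a^{1/n-1/p}$; in the subcritical case $\frac1n-\frac1p<0$ and $\frac1r = \frac1b(\frac1p-\frac1n)$, yielding the weight $J_\phi^{-b}$ and the factor $C_b^{1/p-1/n}$; in the critical case $p=n$ the residual exponent vanishes and the factor is $1$. In every regime this produces precisely $C_c^{|1/n-1/p|}$, and multiplying by the $K^{1/n}$ coming from the distortion inequality gives \rf{eqInequalityForGradientRn}.

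The routine parts are the arithmetic of the Hölder exponents, matched to the three definitions of $q$, together with verifying $q\ge 1$ under the stated range of $p$; in particular the lower bound $1+\frac{n-1}{nb+1}\le p$ in the subcritical regime is exactly the condition $q\ge 1$, as a direct substitution confirms. The genuine obstacle is the rigorous justification of the chain rule and of the membership $f\circ\phi\in W^{1,q}$ for non-smooth $\phi$, which is only known to lie in $W^{1,p}_{loc}$: one must approximate $f$ by smooth functions, use that $\phi$ maps null sets to null sets (property 3 of Theorem \ref{theoChangeRn}) so that $(\nabla f)\circ\phi$ is well defined almost everywhere, and pass to the limit using the already-established estimate for smooth functions to control the approximation in $W^{1,q}$.
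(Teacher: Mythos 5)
Your proposal is correct and follows essentially the same route as the paper: the pointwise chain rule combined with the distortion inequality to replace $|D\phi|$ by $K^{1/n}J_\phi^{1/n}$, then a single Hölder step splitting $J_\phi^{1/n}=J_\phi^{1/p}J_\phi^{1/n-1/p}$ so that the first factor is absorbed by the change of variables \rf{eqChainRule} and the second is controlled by \rf{eqAKBKRn}; your three-function Hölder with $\frac1q=\frac1p+\frac1r$ is just a rewriting of the paper's application of Hölder after raising to the power $q$. The extra care you devote to justifying the chain rule for non-smooth $f$ is a point the paper delegates to a citation of Klepr\v{l}\'ik, so it is a welcome addition rather than a divergence.
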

Note that $1\geq \frac1q$ is granted in the supercritical and the critical cases, while in the subcritical it is equivalent to $p \geq 1+\frac{n-1}{nb +1}$.

\begin{proof}

By the chain rule (see \cite[Theorem 1.3]{Kleprlik} for instance) we have that
\begin{align*}
\norm{\nabla(f\circ \phi)}_{L^q(\Omega)}
	& \leq \left(\int_\Omega |\nabla f (\phi(z)))|^q |D^t\phi(z)|^q \right)^\frac1q
		\leq K^\frac1n \left(\int_\Omega |\nabla f (\phi(z)))|^q |J_\phi(z)|^\frac{q}n \right)^\frac1q.\\
\end{align*}
In case $p=q=n$, by \rf{eqChainRule} we have shown that
$$\norm{\nabla(f\circ \phi)}_{L^p(\Omega)}
	 \leq K^\frac1n \norm{\nabla f}_{L^p(\phi(\Omega))}.
$$

We need to study the supercritical and the subcritical cases. Let $1<p<\infty$ with $p\neq n$  (the case $p=\infty$ follows by an analogous reasoning). The H\"older inequality implies that
\begin{align*}
\norm{\nabla(f\circ \phi)}_{L^q(\Omega)}
	& \leq K^\frac1n \left(\int_\Omega |\nabla f (\phi(z))|^p |J_\phi(z)|\right)^\frac1p \left(\int_\Omega J_\phi(z)^{\left(\frac{q}n-\frac{q}p\right) \frac{p}{p-q}}\right)^\frac{p-q}{pq}\\
	& \leq K^\frac1n \left(\int_\Omega J_\phi(z)^{\frac{p-n}n \frac{q}{p-q}}\right)^\frac{p-q}{pq} \norm{\nabla f}_{L^p(\phi(\Omega))},
\end{align*}
and it remains to control the integral of the Jacobian.
For \rf{eqAKBKRn} to apply we need that $\frac{p-n}n \frac{q}{p-q} $ equals $a$ or $-b$ depending on the setting.

First we study the supercritical case, $p> n$. 
Since $p> q$, we have that $\frac{p-n}n \frac{q}{p-q} > 0$, and we only need to check that
$$\frac{p-n}{n} \frac{q}{p-q} = a,$$
that is, $\frac{p-q}{pq} = \frac{p-n}{n a p}$, which is equivalent to our first assumption
$$\frac1q =\frac1p + \frac{1}{a}\left(\frac1n-\frac1p\right).$$

Next we study the case $p< n$ and $p>q$. Here $\frac{p-n}n \frac{q}{p-q} < 0$, and we will check the other equality, that is $\frac{p-n}{n} \frac{q}{p-q} = -b$,
or, changing signs, 
$$\frac{n-p}{n} \frac{q}{p-q} = b,$$
which is equivalent to $\frac{p-q}{pq} = \frac{n-p}{nbp}$ and finally, to
$$\frac1q = \frac1p + \frac1b \left(\frac1p-\frac1n\right).$$
\end{proof}

\begin{corollary}\label{coroCompositionSobolevRn}
Let $K\geq 1$ and $1<p \leq \infty$. Given a  $K$-quasiconformal mapping $\phi:\Omega \to \Omega'$, let $B$ be a ball with $2B\subset \Omega$, let $B'$ a ball with $\phi(2B)\subset B'$, and let $b>0$ and $C_b$ satisfy \rf{eqAKBKRn} for $U=2B$. Let $q$ be defined as before.  
If $p>n$, then
$$\norm{T_\phi^B}_{{W}^{1,p} \to {W}^{1,q}} \leq C_{K,n,p,q,|B|,|B'|}\left(1+C_a^{\frac1n-\frac1p} \right).$$
If $p=n$, then
$$\norm{T_\phi^B}_{{W}^{1,n} \to {W}^{1,n}} \leq C_{K,n,b,C_b,|B|,|B'|}.$$
If $p<n$, then
$$\norm{T_\phi^B}_{{W}^{1,p} \to {W}^{1,q}} \leq C_{K,n,p,q,|B|,|B'|}  C_b^{\frac1p-\frac1n}.$$
\end{corollary}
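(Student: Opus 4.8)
The plan is to reduce the $W^{1,q}$ estimate for $T_\phi^B f = \varphi_B\cdot(f\circ\phi)$ to the gradient estimate of Theorem \ref{theoCompositionQCSobolev} together with a Lebesgue-type control of the undifferentiated composition. Since $\varphi_B\in C^\infty_c(2B)$, the function $T_\phi^B f$ is supported in $2B$, and by the Leibniz rule $\nabla T_\phi^B f = \varphi_B\,\nabla(f\circ\phi) + (\nabla\varphi_B)\,(f\circ\phi)$, with $\norm{\varphi_B}_{L^\infty}\le 1$ and $\norm{\nabla\varphi_B}_{L^\infty}\le C/r_B$, where $r_B$ is the radius of $B$. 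Hence
\[
\norm{T_\phi^B f}_{W^{1,q}} \le \norm{\nabla(f\circ\phi)}_{L^q(2B)} + \left(1 + \tfrac{C}{r_B}\right)\norm{f\circ\phi}_{L^q(2B)},
\]
so it suffices to estimate the principal term $\norm{\nabla(f\circ\phi)}_{L^q(2B)}$ and the lower-order term $\norm{f\circ\phi}_{L^q(2B)}$ separately.

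For the principal term I would apply Theorem \ref{theoCompositionQCSobolev} to the restriction $\phi\colon 2B\to\phi(2B)$, which is still $K$-quasiconformal, with $U=2B$ and to the restriction of $f$ to $\phi(2B)\subset B'$, obtaining $\norm{\nabla(f\circ\phi)}_{L^q(2B)}\le K^{1/n}C_c^{|1/n-1/p|}\norm{\nabla f}_{L^p(B')}$ with $c=a$, $c=1$ or $c=b$ according to the regime. This already carries the exact power of $C_a$ (resp.\ $C_b$) appearing in the statement, and in the critical case the constant $K^{1/n}$.

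The lower-order term is where the three cases genuinely differ, and it is the step I expect to be the main obstacle, because the target integrability is the one of the \emph{gradient}, not the exponent the Lebesgue lemma returns from $L^p$. The idea is to first upgrade the integrability of $f$ on the fixed ball $B'$ by a Sobolev embedding, and then apply the Lebesgue composition estimate (Lemma \ref{lemCompositionQCLebegueRn}, with the domain taken to be $2B$) so as to land in some $L^{\tilde q}(2B)$ with $\tilde q\ge q$, after which the trivial inclusion $L^{\tilde q}(2B)\hookrightarrow L^q(2B)$ closes the estimate. When $p>n$, Morrey's embedding gives $\norm{f}_{L^\infty(B')}\le C\norm{f}_{W^{1,p}(B')}$; as composition with a homeomorphism preserves the essential supremum, $\norm{f\circ\phi}_{L^q(2B)}\le|2B|^{1/q}\norm{f}_{L^\infty(B')}$ carries \emph{no} power of $C_a$, which is the source of the additive $1$ in the factor $1+C_a^{1/n-1/p}$. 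When $p<n$ I would use the endpoint embedding $W^{1,p}(B')\hookrightarrow L^{p^*}(B')$, $p^*=\frac{np}{n-p}$, and apply Lemma \ref{lemCompositionQCLebegueRn} from $L^{p^*}$ with exponent $b$; a direct computation gives $\frac1{\tilde q}=\frac1{p^*}\bigl(1+\frac1b\bigr)=\frac1q-\frac1n$, so indeed $\tilde q>q$, the inclusion factor on $2B$ is $|2B|^{1/n}$, and the resulting power of the constant is $C_b^{1/p^*}=C_b^{1/p-1/n}$, matching the principal term. Finally, when $p=n$ one has $W^{1,n}(B')\hookrightarrow L^r(B')$ for every finite $r$; choosing $r=n\tfrac{b+1}{b}$ and composing lands exactly in $L^n=L^q$ with a constant depending on $b$ and $C_b$, which is permitted by the form $C_{K,n,b,C_b,|B|,|B'|}$ of the critical-case bound.

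Collecting the principal and lower-order contributions in each regime yields the three displayed inequalities. The only genuinely delicate point, as indicated, is the bookkeeping of the Sobolev exponent in the subcritical lower-order term: one must ensure simultaneously that $\tilde q\ge q$ (so the bounded-domain inclusion is available) and that the power of $C_b$ does not exceed $\frac1p-\frac1n$, and it is precisely the endpoint choice $p^*$ that achieves both at once. Every other ingredient is either Theorem \ref{theoCompositionQCSobolev}, the elementary estimate of Lemma \ref{lemCompositionQCLebegueRn}, or a standard embedding on the fixed ball $B'$.
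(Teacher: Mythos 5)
Your proposal is correct and follows essentially the same route as the paper: Leibniz' rule, Theorem \ref{theoCompositionQCSobolev} for the gradient term, and Lemma \ref{lemCompositionQCLebegueRn} combined with the Gagliardo--Nirenberg--Sobolev (resp.\ critical, resp.\ Morrey) embedding for the lower-order term, with exactly the exponent arithmetic $\frac1{p^*}\bigl(1+\frac1b\bigr)=\frac1q-\frac1n$ that the paper uses. The only (cosmetic) difference is that you close the estimate by the trivial inclusion $L^{\tilde q}(2B)\hookrightarrow L^q(2B)$, whereas the paper invokes the Poincar\'e inequality (subcritical and critical cases) and the algebra property of $W^{1,q}$ (supercritical case) to recover the full norm.
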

\begin{proof}
We will assume that $B=B'=B_1$, which can be achieved by translation and rescaling. Let $f\in W^{1,p}$. First take $n> p > q$, and let $\frac1{q^*}=\frac1q-\frac1n$. By  Leibniz' rule and H\"older's inequality,
\begin{align}\label{eqBumpAndGoSubcritical}
\norm{\nabla(\varphi \cdot f\circ\phi)}_{L^q(2B_1)}
\nonumber	& \leq \norm{\nabla\varphi \cdot f\circ\phi}_{L^q(2B_1)} + \norm{\varphi \cdot \nabla(f\circ\phi)}_{L^q(2B_1)}\\
	& \leq \norm{\nabla\varphi}_{L^n} \norm{f\circ\phi}_{L^{q^*}(2B_1)} + \norm{\varphi}_{L^\infty}\norm{\nabla(f\circ\phi)}_{L^q(2B_1)}.
\end{align}
Since $\frac1{q^*}=\frac1q-\frac1n=\frac1{p^*}+\frac1b\frac1{p^*}$, Lemma \ref{lemCompositionQCLebegueRn} and the Gagliardo-Nirenberg-Sobolev inequality (see \cite[Theorem 5.6.1/2]{Evans}, for instance) give
 $$\norm{f \circ \phi}_{L^{q^*}(2B_1)}\leq C_b^{\frac1{p^*}} \norm{f}_{L^{p^*}(\phi(2B_1))} \leq  C_b^{\frac1p-\frac1n} \norm{f}_{L^{p^*}(B_1)}\leq C_{p,n} C_b^{\frac1p-\frac1n} \norm{f}_{W^{1,p}(B_1)}.$$
 Theorem \ref{theoCompositionQCSobolev} gives
 $$\norm{\nabla( f \circ \phi)}_{L^q(2B_1)}\leq K^\frac1n C_b^{\frac1p-\frac1n}  \norm{\nabla f}_{L^p(\phi(2B_1))}.$$
 Back to \rf{eqBumpAndGoSubcritical}, we have shown that
\begin{align*}
\norm{\nabla(\varphi_B \cdot f\circ\phi)}_{L^q(2B_1)}
	& \leq C_{K,n,p}  C_b^{\frac1p-\frac1n} \norm{f}_{{W}^{1,p}}.
\end{align*}
By the Poincar\'e inequality, 
\begin{align*}
\norm{\varphi \cdot f\circ\phi }_{{W}^{1,q}}
	& \leq C \norm{\nabla(\varphi \cdot f\circ\phi)}_{L^q(2B_1)}
		\leq C_{K,n,p}  C_b^{\frac1p-\frac1n} \norm{f}_{{W}^{1,p}}.
\end{align*}

The critical case, $p=q=n$, instead of \rf{eqBumpAndGoSubcritical} we use
\begin{align*}
\norm{\nabla(\varphi \cdot f\circ\phi)}_{L^n(2B_1)}
	& \leq \norm{\nabla\varphi}_{L^\infty} \norm{f\circ\phi}_{L^{n}(2B_1)} + \norm{\varphi}_{L^\infty}\norm{\nabla(f\circ\phi)}_{L^n(2B_1)}.
\end{align*}
 Theorem \ref{theoCompositionQCSobolev} gives
 $$\norm{\nabla( f \circ \phi)}_{L^n(2B_1)}\leq K^\frac1n  \norm{\nabla f}_{L^n(\phi(2B_1))}\leq K^\frac1n  \norm{\nabla f}_{L^n}$$
and using Lemma \ref{lemCompositionQCLebegueRn} and the continuous embedding $W^{1,n}(B_1) \subset L^{\frac{(b+1)n}{b}}(B_1)$ we obtain
 $$\norm{f \circ \phi}_{L^{n}(2B_1)}\leq C_b^{\frac{b}{b+1}\frac1{n}} \norm{f}_{L^{\frac{(b+1)n}{b}}(\phi(2B_1))} \leq C_b^{\frac{b}{b+1}\frac1{n}} \norm{f}_{L^{\frac{(b+1)n}{b}}(B_1)} \leq C_{b,n,C_b}  \norm{f}_{W^{1,n}}.$$
Using again the Poincar\'e inequality we end the proof.

In the supercritical case, we can use the algebra structure of $W^{1,q}(2B_1)$ (see \cite[Section 4.6.4]{RunstSickel}, for instance) to  get 
\begin{align*}
\norm{\varphi \cdot f\circ\phi}_{W^{1,q}}
	& \leq C_{n,q} \norm{\varphi}_{W^{1,q}(2B_1)} \norm{f\circ\phi}_{W^{1,q}(2B_1)}
		=  C_{n,q} \left(\norm{f\circ\phi}_{L^q(2B_1)} + \norm{\nabla(f\circ\phi)}_{L^q(2B_1)}\right).
\end{align*}
By Theorem \ref{theoCompositionQCSobolev} we obtain that 
\begin{align*}
\norm{\varphi \cdot f\circ\phi}_{W^{1,q}}
	& \leq C_{n,q}  \left(\norm{f}_{L^\infty(B_1)} + K^\frac1n C_a^{\frac1n-\frac1p}\norm{\nabla f}_{L^p(\phi(2B_1))}\right)\\
	& \leq C_{K,n,p,q}\left(1+C_a^{\frac1n-\frac1p} \right) \norm{f}_{W^{1,p}}.
\end{align*}

\end{proof}

\begin{remark}\label{remcontinuity}
The proof given above gives constants which blow up when $p\to n$, because they depend on Gagliardo-Niremberg and Morrey inequalities. However, one can show by interpolation with the critical case (see Theorem \ref{theoInterpolationProperty} below) that there exist a constant $C=C_{K,n,p,b,C_b,a,C_a,|B|,|B'|}$ which depends continuously on $p \in \left[\frac{nb+n}{nb+1},\infty\right)$ such that 
$$\norm{T_\phi^B}_{{W}^{1,p} \to {W}^{1,q}} \leq C.$$
\end{remark}

\section{Fractional Spaces}\label{secFractional}
\subsection{A note on interpolation}
When dealing with fractional smoothness there are two families which are studied the most, the Besov and the Triebel-Lizorkin scale, which include a number of classical function spaces, as the H\"older-Zygmund class, the Bessel potentials, the Sobolev traces... (see \cite[Section 2.3.5]{TriebelTheory}). We recall their definition:
\begin{definition}\label{defCollection}Let $\Phi(\R^n)$ be the collection of all the families $\Psi=\{\psi_j\}_{j=0}^\infty\subset C^\infty_c(\R^n)$ such that
\begin{equation*}
\left\{ 
\begin{array}{ll}
\supp \,\psi_0 \subset \DDD(0,2), & \\
\supp \,\psi_j \subset \DDD(0,2^{j+1})\setminus \DDD(0,2^{j-1}) & \mbox{ if $j\geq 1$},\\	
\end{array}
\right.
\end{equation*}
for all multiindex $\alpha\in \N^n$ there exists a constant $c_\alpha$ such that
\begin{equation*}
\norm{D^\alpha \psi_j}_\infty \leq \frac{c_\alpha}{2^{j |\alpha|} } \mbox{\,\,\, for every $j\geq 0$}
\end{equation*}
and
\begin{equation*}
\sum_{j=0}^\infty \psi_j(x)=1 \mbox{\,\,\, for every $x\in\R^n$.}
\end{equation*}
\end{definition}
 
 \begin{definition}
Given any Schwartz function $\psi \in \mathcal{S}(\R^n)$,  its {\em Fourier transform} is
$$F\psi(\zeta)=\int_{\R^n} e^{-2\pi i x\cdot \zeta} \psi(x) dm(x).$$
This notion extends to the tempered distributions $\mathcal{S}(\R^n)'$ by duality (see \cite[Definition 2.3.7]{Grafakos}).
\end{definition}

 \begin{definition}
Let $s \in \R$,  $1\leq p\leq \infty$, $1\leq q\leq\infty$ and $\Psi \in \Phi(\R^n)$. For any tempered distribution $f\in \mathcal{S}'(\R^n)$ we define its {\em non-homogeneous Besov norm}
\begin{equation*}
\norm{f}_{B^s_{p,q}}^\Psi=\norm{\left\{2^{sj}\norm{\left(\psi_j \widehat{f}\right)\widecheck{\,}\, }_{L^p}\right\}}_{l^q},
\end{equation*}
and we call $B^s_{p,q}\subset \mathcal{S}'$ to the set of tempered distributions such that this norm is finite.

Let $s \in \R$,  $1\leq p< \infty$, $1\leq q\leq\infty$ and $\Psi \in \Phi(\R^n)$. For any tempered distribution $f\in \mathcal{S}'(\R^n)$ we define its {\em non-homogeneous Triebel-Lizorkin norm}
\begin{equation*}
\norm{f}_{F^s_{p,q}}^\Psi=\norm{\norm{\left\{2^{sj}\left(\psi_j \widehat{f}\right)\widecheck{\,}\right\}}_{l^q}}_{L^p},
\end{equation*}
and we call $F^s_{p,q}\subset \mathcal{S}'$ to the set of tempered distributions such that this norm is finite. 
\end{definition}
These norms are equivalent for different choices of $\Psi$. Usually one works with radial $\psi_j$ and such that $\psi_{j+1}(x)=\psi_j(x/2)$ for $j\geq 1$. Of course we will omit $\Psi$ in our notation since it plays no role (see \cite[Section 2.3]{TriebelTheory}).

We will work with the so-called Bessel potential spaces 
$$H^{s,p}:=F^s_{p,2}.$$
 For  $s\in \N$ and $1<p<\infty$ we have that $W^{s,p}=H^{s,p}$ and $L^{p}=H^{0,p}$. 


We recall a result from interpolation theory.
\begin{theorem}[see {\cite[Theorem 6.4.5 - (7)]{BerghLofstrom}}]\label{theoLebesgueSobolevInterpolation}
Let $0<s<1$, let $1 < p_0, p_1< \infty$. Let 
$$\frac1{p}=\frac{1-s}{p_0}+\frac{s}{p_1}.$$
 Then
$$(L^{p_0},W^{1,p_1})_{[s]} = H^{s,p},$$
where the interpolation space $(\cdot,\cdot)_{[s]}$ is defined as in \cite[Chapter 4]{BerghLofstrom}.
\end{theorem}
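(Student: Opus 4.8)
The plan is to establish this the way it is done in \cite{BerghLofstrom}, namely by the retraction--coretraction argument that reduces interpolation of Bessel potential spaces to interpolation of (weighted) vector-valued Lebesgue spaces, where Calder\'on's theorem applies. I would not try to describe the complex interpolation space by hand. As a first reduction I recast the endpoints as Triebel--Lizorkin spaces: by definition $L^{p_0}=F^0_{p_0,2}=H^{0,p_0}$, and for integer smoothness and $1<p_1<\infty$ the Sobolev space agrees with the Bessel potential space, $W^{1,p_1}=F^1_{p_1,2}=H^{1,p_1}$, an identification that rests on the Littlewood--Paley characterization together with the Mikhlin multiplier theorem. Hence it suffices to prove the general fact $(F^{s_0}_{p_0,2},F^{s_1}_{p_1,2})_{[\theta]}=F^s_{p,2}$, where $s=(1-\theta)s_0+\theta s_1$ and $\frac1p=\frac{1-\theta}{p_0}+\frac\theta{p_1}$, and then to specialize to $s_0=0$, $s_1=1$ and $\theta=s$, which returns exactly the exponents in the statement.

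Next I would fix a family $\Psi=\{\psi_j\}\in\Phi(\R^n)$ and introduce the weighted vector-valued space $L^p(\ell^2_s)$ with norm $\norm{\{g_j\}}_{L^p(\ell^2_s)}=\norm{\norm{\{2^{sj}g_j\}_j}_{\ell^2}}_{L^p}$. The coretraction $S f=\{(\psi_j\widehat f)\widecheck{\,}\}_j$ is, by the very definition of the Triebel--Lizorkin norm, an isometry of $F^s_{p,2}$ into $L^p(\ell^2_s)$. For the retraction I choose an auxiliary family $\{\varphi_j\}$ with $\varphi_j\equiv 1$ on $\supp\psi_j$, supported on a slightly larger annulus and obeying the same derivative bounds as in Definition \ref{defCollection}, and set $R\{g_j\}=\sum_j(\varphi_j\widehat{g_j})\widecheck{\,}$. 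Since $\varphi_j\psi_j=\psi_j$ and $\sum_j\psi_j\equiv 1$, one checks that $RS=\mathrm{Id}$, so that $R$ is a retraction with coretraction $S$ acting simultaneously on the whole couple.

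With this structure in place I invoke the retraction theorem for the complex method (\cite[Theorem 6.4.2]{BerghLofstrom}), which yields that $(F^{s_0}_{p_0,2},F^{s_1}_{p_1,2})_{[\theta]}$ is the image under $R$ of $(L^{p_0}(\ell^2_{s_0}),L^{p_1}(\ell^2_{s_1}))_{[\theta]}$. The latter couple is computed by Calder\'on's theorem on complex interpolation of vector-valued $L^p$ spaces: the Lebesgue exponents combine as $\frac1p=\frac{1-\theta}{p_0}+\frac\theta{p_1}$, while the $\ell^2$-weights $2^{s_ij}$ interpolate to $2^{sj}$ with $s=(1-\theta)s_0+\theta s_1$, so that $(L^{p_0}(\ell^2_{s_0}),L^{p_1}(\ell^2_{s_1}))_{[\theta]}=L^p(\ell^2_s)$. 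Transporting back through $R$ and $S$ identifies the interpolation space with $F^s_{p,2}$, which is the assertion.

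The main obstacle is hidden inside the two black boxes above: the boundedness of the retraction $R$ from $L^p(\ell^2_s)$ onto $F^s_{p,2}$. This is where the genuine harmonic analysis enters, since controlling $\norm{\sum_j(\varphi_j\widehat{g_j})\widecheck{\,}}_{F^s_{p,2}}$ by $\norm{\{g_j\}}_{L^p(\ell^2_s)}$ requires the vector-valued Littlewood--Paley/Mikhlin multiplier theorem, equivalently the Fefferman--Stein vector-valued maximal inequality; the finite overlap of the supports of the $\varphi_j$ is essential. The same circle of ideas underlies the identification $W^{1,p_1}=F^1_{p_1,2}$ used at the outset. Once these inputs are granted, the interpolation identity itself is purely formal, which is why in the body of the paper I simply quote the result from \cite[Theorem 6.4.5]{BerghLofstrom}.
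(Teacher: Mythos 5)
The paper offers no proof of this statement: it is quoted verbatim from \cite[Theorem 6.4.5]{BerghLofstrom}, and your retraction--coretraction argument (Littlewood--Paley identification of the endpoints, the retraction onto the weighted vector-valued space $L^p(\ell^2_s)$, Calder\'on's theorem for complex interpolation of vector-valued Lebesgue spaces, and transport back) is precisely the route taken in that reference. Your sketch is correct and correctly isolates the genuine analytic input, namely the boundedness of the retraction via the vector-valued multiplier/maximal estimates, so there is nothing to add beyond what the citation already covers.
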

Next, the interpolation property.
\begin{theorem}[See {\cite[Theorem 4.1.2]{BerghLofstrom}}]\label{theoInterpolationProperty}
Let $0<s<1$, let $1 < p_0, p_1, q_0, q_1< \infty$. Let 
$$\frac1{p}=\frac{1-s}{p_0}+\frac{s}{p_1}, \quad\quad \frac1{q}=\frac{1-s}{q_0}+\frac{s}{q_1}.$$
Then, given a linear operator $T: L^{p_0} + W^{1,p_1} \to L^{q_0} + W^{1,q_1}$, it satisfies that 
$$\norm{T}_{H^{s,p}, H^{s,q}}\leq\norm{T}_{L^{p_0}, L^{q_0}}^{1-s}\norm{T}_{W^{1,p_1}, W^{1,q_1}}^s.$$
\end{theorem}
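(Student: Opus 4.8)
The plan is to deduce the inequality from the abstract interpolation property of Calder\'on's complex method $(\cdot,\cdot)_{[\theta]}$ together with the endpoint identification already recorded in Theorem \ref{theoLebesgueSobolevInterpolation}. The starting point is the fundamental exactness of the complex method: for any two compatible couples of Banach spaces $(X_0,X_1)$ and $(Y_0,Y_1)$ and any linear map $T:X_0+X_1\to Y_0+Y_1$ which restricts to a bounded operator $X_j\to Y_j$ for $j=0,1$, one has for each $0<\theta<1$
$$\norm{T}_{(X_0,X_1)_{[\theta]}\to (Y_0,Y_1)_{[\theta]}}\leq \norm{T}_{X_0\to Y_0}^{1-\theta}\,\norm{T}_{X_1\to Y_1}^{\theta}.$$
This is precisely \cite[Theorem 4.1.2]{BerghLofstrom}, so the first step is simply to record it in the form above.

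The second step is to specialize the couples. I would take $X_0=L^{p_0}$, $X_1=W^{1,p_1}$ on the domain side and $Y_0=L^{q_0}$, $Y_1=W^{1,q_1}$ on the target side, with $\theta=s$. Both couples are admissible because the four spaces involved embed continuously into the common ambient space $\mathcal{S}'(\R^n)$. With this choice the exponent relations $\frac1p=\frac{1-s}{p_0}+\frac{s}{p_1}$ and $\frac1q=\frac{1-s}{q_0}+\frac{s}{q_1}$ are exactly the hypotheses of Theorem \ref{theoLebesgueSobolevInterpolation}, which therefore identifies $(L^{p_0},W^{1,p_1})_{[s]}=H^{s,p}$ and $(L^{q_0},W^{1,q_1})_{[s]}=H^{s,q}$ with equivalent norms. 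Substituting these two identifications into the displayed abstract inequality yields the claimed bound.

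The entire content of the argument is concentrated in the endpoint identification, which is the genuinely nontrivial ingredient but which we are entitled to assume. The remaining points are bookkeeping: the admissibility of the couples (handled by the ambient $\mathcal{S}'$) and the fact that the norm equivalences produced by Theorem \ref{theoLebesgueSobolevInterpolation} alter the final bound only by a multiplicative factor depending on $n,s,p,q$ and the implicit equivalence constants, so that the multiplicative structure $\norm{T}_{L^{p_0},L^{q_0}}^{1-s}\norm{T}_{W^{1,p_1},W^{1,q_1}}^s$ is preserved (the stated inequality being understood up to such constants, since the genuine complex-interpolation norm satisfies it exactly while the $H^{s,p}$ norm is only equivalent to it). I therefore expect no real obstacle once the complex-method machinery and the identification theorem are in hand; the only care needed is to check that the exponent conditions line up simultaneously on the domain and target sides.
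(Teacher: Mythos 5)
Your argument is exactly the intended one: the paper proves nothing here beyond citing \cite[Theorem 4.1.2]{BerghLofstrom} for the exactness of the complex method and relying on the identification $(L^{p_0},W^{1,p_1})_{[s]}=H^{s,p}$ recorded in Theorem \ref{theoLebesgueSobolevInterpolation}, which is precisely the two-step reduction you carry out. Your caveat that the inequality holds exactly for the complex-interpolation norms and only up to equivalence constants for the $H^{s,p}$ norms is a fair and harmless refinement, since every subsequent application in the paper absorbs such constants.
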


\subsection{Composition in Bessel potential spaces}\label{secBesselPotential}
\begin{proposition}\label{propoCompositionQCSobolevFractionalSub}
 Let $n\ge 2$, $K\geq 1$, $0\leq s \leq 1$ and $1<p<\frac{n}{s}$. Given a  $K$-quasiconformal mapping $\phi:\Omega \to \Omega'$ between two domains in $\R^{n}$, with $2B_1\subset \Omega$ and $\phi(2B_1)\subset B_1$, and real numbers $b$ and $C_b$ satisfying \rf{eqAKBKRn} for $U=2B$, if $\frac1q := \frac1p + \frac{1}{b}\left(\frac1p-\frac{s}{n}\right)<1$,  then there exists a constant $C=C_{K,n,s,p,q}$ such that
\begin{equation}\label{eqTBoundedFractional}
  \norm{T_\phi^{B_1}}_{{H}^{s,p} \to {H}^{s,q}} \leq C C_b^{\frac1p-\frac{s}n}.
 \end{equation} 
\end{proposition}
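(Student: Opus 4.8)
The plan is to deduce \rf{eqTBoundedFractional} by complex interpolation between the two endpoint estimates already available for the single linear operator $T_\phi^{B_1}f=\varphi_{B_1}\cdot(f\circ\phi)$: at smoothness $0$ it is controlled by Corollary \ref{coroCompositionLebesgueRn} (from $L^{p_0}$ to $L^{q_0}$), and at smoothness $1$ by the subcritical case of Corollary \ref{coroCompositionSobolevRn} (from $W^{1,p_1}$ to $W^{1,q_1}$, requiring $p_1<n$). The extreme values $s\in\{0,1\}$ are exactly these corollaries, so I would only treat $0<s<1$, where I intend to feed the two bounds into the interpolation property of Theorem \ref{theoInterpolationProperty}.

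First I would fix admissible endpoint exponents. The two corollaries force $\frac1{q_0}=\frac{b+1}{b}\frac1{p_0}$ and $\frac1{q_1}=\frac{b+1}{b}\frac1{p_1}-\frac1{bn}$, and I would impose the interpolation identity $\frac1p=\frac{1-s}{p_0}+\frac s{p_1}$. A direct substitution shows that the induced target exponent is $\frac1q=\frac{1-s}{q_0}+\frac s{q_1}=\frac{b+1}{b}\frac1p-\frac s{bn}$, which is precisely the exponent in the statement, and that the power of $C_b$ produced by Theorem \ref{theoInterpolationProperty},
\begin{equation*}
\frac{1-s}{p_0}+\left(\frac1{p_1}-\frac1n\right)s=\frac1p-\frac sn,
\end{equation*}
is exactly the one claimed in \rf{eqTBoundedFractional}, and notably is independent of the particular pair $(p_0,p_1)$ obeying the identity. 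Hence the whole task reduces to exhibiting one admissible pair.

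The heart of the argument, and the main obstacle, is to verify that an admissible pair exists. Writing $x_0=\frac1{p_0}$, $x_1=\frac1{p_1}$, the requirements $1<p_0,p_1,q_0,q_1<\infty$ and $p_1<n$ translate, through the identities above, into $x_0\in\left(0,\frac b{b+1}\right)$ and $x_1\in\left(\frac1n,\frac{bn+1}{(b+1)n}\right)$ (the binding upper constraints being $q_0>1$ and $q_1>1$, and $x_1>\frac1n$ being precisely $p_1<n$). Since $0<s<1$ makes both interpolation weights positive, as $(x_0,x_1)$ ranges over this open rectangle the quantity $\frac1p=(1-s)x_0+sx_1$ sweeps out exactly the open interval $\left(\frac sn,\ \frac{bn+s}{(b+1)n}\right)$, the right endpoint being $(1-s)\frac b{b+1}+s\frac{bn+1}{(b+1)n}=\frac{bn+s}{(b+1)n}$ after a short computation. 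I would then observe that the two standing hypotheses are exactly the two endpoint conditions: $p<\frac ns$ says $\frac1p>\frac sn$, while $\frac1q<1$ rearranges to $\frac1p<\frac{bn+s}{(b+1)n}$. Thus $\frac1p$ lies strictly inside the achievable interval, and an admissible pair $(p_0,p_1)$ can be selected, for instance as a continuous function of $n,s,p,q$.

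Finally I would assemble the constant. Applying Theorem \ref{theoInterpolationProperty} with these exponents, together with the endpoint bounds $\norm{T_\phi^{B_1}}_{L^{p_0}\to L^{q_0}}\le C_b^{1/p_0}$ and $\norm{T_\phi^{B_1}}_{W^{1,p_1}\to W^{1,q_1}}\le C_{K,n,p_1,q_1}\,C_b^{\frac1{p_1}-\frac1n}$, gives
\begin{equation*}
\norm{T_\phi^{B_1}}_{H^{s,p}\to H^{s,q}}\le C_{K,n,p_1,q_1}^{\,s}\,C_b^{\frac1p-\frac sn}.
\end{equation*}
Since $b$ is itself determined by $n,s,p,q$ via $\frac1q=\frac1p+\frac1b\left(\frac1p-\frac sn\right)$, and $p_1,q_1$ were chosen as functions of these same parameters, the prefactor $C_{K,n,p_1,q_1}^{\,s}$ is of the allowed form $C_{K,n,s,p,q}$, yielding \rf{eqTBoundedFractional}. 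The only point demanding extra care is the passage between the local statement and the global spaces on which Theorem \ref{theoInterpolationProperty} is phrased; this is harmless here because $T_\phi^{B_1}f$ is supported in $2B_1$ and depends only on $f|_{\phi(2B_1)}$ with $\phi(2B_1)\subset B_1$.
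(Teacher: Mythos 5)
Your proposal is correct and follows essentially the same route as the paper: complex interpolation of $T_\phi^{B_1}$ between the Lebesgue bound of Corollary \ref{coroCompositionLebesgueRn} and the subcritical Sobolev bound of Corollary \ref{coroCompositionSobolevRn} via Theorem \ref{theoInterpolationProperty}, with the exponent of $C_b$ collapsing to $\frac1p-\frac sn$. The only (harmless) difference is that the paper picks the endpoint exponents explicitly through the relations \rf{eqDefineSubcriticalPjQj} (lines meeting on the critical line in Figure \ref{figActionInFractional}), whereas you establish the existence of an admissible pair by showing that the achievable range of $\frac1p$ is exactly the open interval $\left(\frac sn,\frac{bn+s}{(b+1)n}\right)$ carved out by the hypotheses $p<\frac ns$ and $q>1$.
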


\begin{proof}
Since $p > q$, we will argue as follows: we will find indices $p_\lambda, q_\lambda$ for $\lambda \in \{0,1\}$, so that 
\begin{equation}\label{eqInterpolateP}
\frac1p=  \frac{1-s}{p_0}+ \frac{s}{p_1}
\end{equation}
and
\begin{equation}\label{eqInterpolateQ}
\frac1q=  \frac{1-s}{q_0}+ \frac{s}{q_1}.
\end{equation}
Moreover, we will need that $(p_0,q_0)$ satisfy the condition in Corollary \ref{coroCompositionLebesgueRn} and $(p_1,q_1)$ satisfy the condition in Corollary \ref{coroCompositionSobolevRn} . Then we will use interpolation. 

We define $p_0,p_1,q_0$ and $q_1$ by the following relations:
\begin{equation}\label{eqDefineSubcriticalPjQj}
\frac{1-\frac1{p}}{n-s}=\frac{1-\frac1{p_\lambda}}{n-\lambda} \mbox{ \quad\quad and \quad\quad }\frac{1-\frac1{q}}{n-s}=\frac{1-\frac1{q_\lambda}}{n-\lambda}
\end{equation}
for $\lambda \in \{0,1\}$ (see Figure \ref{figActionInFractional}).  It follows immediately  that $\frac1{p_\lambda}<\frac1{q_\lambda}<1$. It is a routine to check that \rf{eqInterpolateQ} and \rf{eqInterpolateP} are satisfied. Indeed, using \rf{eqDefineSubcriticalPjQj} we have that
\begin{align*}
1-\frac1p
	& = (1-s)\left(1-\frac1p\right) + s\left(1-\frac1p\right) = (1-s)\left(1-\frac{s}{n}\right)\left(1-\frac1{p_0}\right) + s\left(1+\frac{1-s}{n-1}\right)\left(1-\frac1{p_1}\right).
\end{align*}
Rearranging and using \rf{eqDefineSubcriticalPjQj} again, we get
\begin{align*}
1-\frac1p
	& = (1-s)\left(1-\frac1{p_0}\right) + s \left(1-\frac1{p_1}\right) + s (1-s)  \left(-\frac{1-\frac1{p_0}}{n}  + \frac{1-\frac1{p_1}}{n-1}\right)\\
	& = (1-s)\left(1-\frac1{p_0}\right) + s \left(1-\frac1{p_1}\right),
\end{align*}
showing \rf{eqInterpolateP}. The counterpart \rf{eqInterpolateQ} is proven using the same reasoning. Next we see that 
$$ \frac1{q} - \frac{1}{p} = \frac{1}{b}\left(\frac1{p} - \frac{s}{n}\right)\quad \implies \quad \frac1{q_\lambda} - \frac{1}{p_\lambda} = \frac{1}{b}\left(\frac1{p_\lambda} - \frac{\lambda}{n}\right).$$
So we assume the condition and plugging \rf{eqDefineSubcriticalPjQj} we get
\begin{align*}
\frac1{q_\lambda} - \frac{1}{p_\lambda}
	&  = \left(1-\frac{1}{p_\lambda}\right) - \left(1-\frac1{q_\lambda} \right)
		 = \frac{n-\lambda}{n-s} \left( \frac1{q} -\frac{1}{p}\right)\\
	& = \frac{n-\lambda}{n-s} \frac{1}{b}\left(\frac{1}{p} - \frac{s}{n}\right)
		=  \frac{1}{b}\left(\frac{n-\lambda}{n-s}\left(\frac1{p}-1\right) + \frac{n-\lambda}{n-s} - \frac{(n-\lambda)s}{(n-s)n}\right).
\end{align*}
Again by \rf{eqDefineSubcriticalPjQj} we obtain
\begin{align*}
\frac1{q_\lambda} - \frac{1}{p_\lambda}
	& =  \frac{1}{b}\left(\frac1{p_\lambda} -1 + \frac{n-\lambda}{n-s} - \frac{(n-\lambda)s}{(n-s)n}\right) 
		=  \frac{1}{b}\left(\frac1{p_\lambda} + \frac{ - n^2 + ns + n^2-n\lambda -ns +\lambda s }{(n-s)n}\right)\\
	& =   \frac{1}{b}\left(\frac1{p_\lambda} -\frac{\lambda}{n} \right).
\end{align*}
Thus, we can use again both Corollary \ref{coroCompositionLebesgueRn} to establish 
\begin{equation}\label{eqTBoundedFractionalInLebsegue}
\norm{T_\phi^{B_1}}_{L^{p_0} \to L^{q_0}} \leq   C_b^{\frac1p_0}
\end{equation}
and Corollary \ref{coroCompositionSobolevRn} to find that 
\begin{equation}\label{eqTBoundedFractionalInSobolev}
\norm{T_\phi^{B_1}}_{W^{1,p_1}\to W^{1,q_1}} \leq C_{K,n,p,q}  C_b^{\frac{1}{p_1}-\frac1n}.
\end{equation}
Estimate \rf{eqTBoundedFractional} follows by the interpolation property in Theorem \ref{theoInterpolationProperty}. Namely,
$$\norm{T_\phi^{B_1}}_{H^{s,p} \to H^{s,q} } \leq \norm{T_\phi^{B_1}}_{L^{p_0}\to L^{q_0}}^{1-s} \norm{T_\phi^{B_1}}_{W^{1,p_1}\to W^{1,q_1}}^s \leq  C_{K,n,p_1,q_1}^s  C_b^{\frac1{p_0}(1-s)+\left(\frac1{p_1}-\frac1n\right)s}.$$
\end{proof}

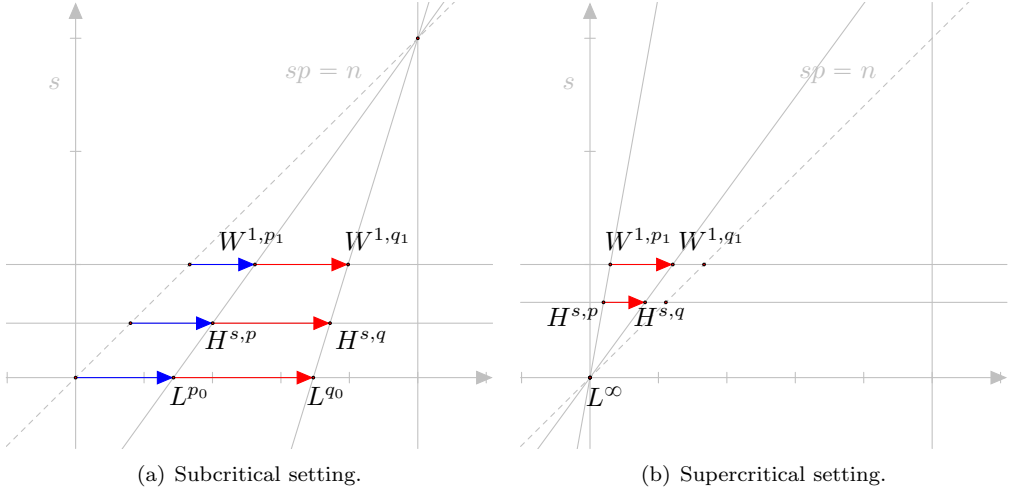
\begin{figure}[ht]
\centering
\subfigure[Subcritical setting.]
{
\begin{tikzpicture}[line cap=round,line join=round,>=triangle 45,x=4.5cm,y=1.5cm]
\draw[->,color=cqcqcq] (-0.20288044871691346,0.) -- (1.219098042976501,0.);
\foreach \x in {-0.2,0.2,0.4,0.6,0.8,1.,1.2}
\draw[shift={(\x,0)},color=cqcqcq] (0pt,2pt) -- (0pt,-2pt);
\draw[->,color=cqcqcq] (0.,-0.6270740036686387) -- (0.,3.3171289076485637);
\foreach \y in {,1.,2.,3.}
\draw[shift={(0,\y)},color=cqcqcq] (2pt,0pt) -- (-2pt,0pt);
\clip(-0.20288044871691346,-0.6270740036686387) rectangle (1.219098042976501,3.3171289076485637);
\draw [color=cqcqcq] (1.,-0.6270740036686387) -- (1.,3.3171289076485637);
\draw [color=cqcqcq,domain=-0.20288044871691346:1.219098042976501] plot(\x,{(-1.-0.*\x)/-1.});
\draw (0.3889485883286697,1.4) node[anchor=north west] {$W^{1,p_1}$};
\draw (0.7583452356121544,1.4) node[anchor=north west] {$W^{1,q_1}$};
\draw [dash pattern=on 2pt off 2pt,color=cqcqcq,domain=-0.20288044871691346:1.219098042976501] plot(\x,{(-0.--3.*\x)/1.});
\draw [color=cqcqcq](0.5835769293705056,2.8285720515639556) node[anchor=north west] {$sp=n$};
\draw [color=cqcqcq](-0.10755228167601415,2.7332438845230564) node[anchor=north west] {$s$};
\draw [->,color=ffqqqq] (0.5239968249699436,1.) -- (0.7963732415936726,1.);
\draw [->,color=qqqqff] (0.3333333333333333,1.) -- (0.5239968249699436,1.);
\draw [color=cqcqcq,domain=-0.20288044871691346:1.219098042976501] plot(\x,{(--0.5719904749098308-2.*\x)/-0.4760031750300564});
\draw [color=cqcqcq,domain=-0.20288044871691346:1.219098042976501] plot(\x,{(--1.3891197247810179-2.*\x)/-0.20362675840632738});
\draw [color=cqcqcq,domain=-0.20288044871691346:1.219098042976501] plot(\x,{(--0.48119770461421985-0.*\x)/1.});
\draw [->,color=qqqqff] (0.16039923487140662,0.4811977046142199) -- (0.40052105506168734,0.48119770461421985);
\draw [->,color=ffqqqq] (0.40052105506168734,0.48119770461421985) -- (0.7435522267620884,0.48119770461421985);
\draw [->,color=qqqqff] (0.,0.) -- (0.2859952374549154,0.);
\draw [->,color=ffqqqq] (0.2859952374549154,0.) -- (0.6945598623905089,0.);
\draw (0.24595633776732073,0.0) node[anchor=north west] {$L^{p_0}$};
\draw (0.6471290407311052,0.0) node[anchor=north west] {$L^{q_0}$};
\draw (0.34922851872829497,0.5) node[anchor=north west] {$H^{s,p}$};
\draw (0.7305411868918921,0.5) node[anchor=north west] {$H^{s,q}$};
\begin{scriptsize}
\draw [fill=ffqqqq] (0.5239968249699436,1.) circle (0.5pt);
\draw [fill=ffqqqq] (0.7963732415936726,1.) circle (0.5pt);
\draw [fill=ffqqqq] (1.,3.) circle (0.5pt);
\draw [fill=ffqqqq] (0.2859952374549154,0.) circle (0.5pt);
\draw [fill=ffqqqq] (0.6945598623905089,0.) circle (0.5pt);
\draw [fill=ffqqqq] (0.40052105506168734,0.48119770461421985) circle (0.5pt);
\draw [fill=ffqqqq] (0.16039923487140662,0.4811977046142199) circle (0.5pt);
\draw [fill=ffqqqq] (0.7435522267620884,0.48119770461421985) circle (0.5pt);
\draw [fill=ffqqqq] (0.,0.) circle (0.5pt);
\draw [fill=ffqqqq] (0.3333333333333333,1.) circle (0.5pt);
\end{scriptsize}
\end{tikzpicture}
}
\subfigure[Supercritical setting.]
{
\begin{tikzpicture}[line cap=round,line join=round,>=triangle 45,x=4.5cm,y=1.5cm]
\draw[->,color=cqcqcq] (-0.20288044871691344,0.) -- (1.219098042976501,0.);
\foreach \x in {-0.2,0.2,0.4,0.6,0.8,1.,1.2}
\draw[shift={(\x,0)},color=cqcqcq] (0pt,2pt) -- (0pt,-2pt);
\draw[->,color=cqcqcq] (0.,-0.6270740036686387) -- (0.,3.3171289076485637);
\foreach \y in {,1.,2.,3.}
\draw[shift={(0,\y)},color=cqcqcq] (2pt,0pt) -- (-2pt,0pt);
\clip(-0.20288044871691344,-0.6270740036686387) rectangle (1.219098042976501,3.3171289076485637);
\draw [color=cqcqcq] (1.,-0.6270740036686387) -- (1.,3.3171289076485637);
\draw [color=cqcqcq,domain=-0.20288044871691344:1.219098042976501] plot(\x,{(-1.-0.*\x)/-1.});
\draw (0.015579934085147445,1.4) node[anchor=north west] {$W^{1,p_1}$};
\draw (0.23006830992717087,1.4) node[anchor=north west] {$W^{1,q_1}$};
\draw [dash pattern=on 2pt off 2pt,color=cqcqcq,domain=-0.20288044871691344:1.219098042976501] plot(\x,{(-0.--3.*\x)/1.});
\draw [color=cqcqcq](0.5835769293705056,2.8285720515639556) node[anchor=north west] {$sp=n$};
\draw [color=cqcqcq](-0.10755228167601415,2.7332438845230564) node[anchor=north west] {$s$};
\draw [->,color=ffqqqq] (0.05927201064555962,1.) -- (0.24197955910407543,1.);
\draw [color=cqcqcq,domain=-0.20288044871691344:1.219098042976501] plot(\x,{(-0.--1.*\x)/0.05927201064555962});
\draw [color=cqcqcq,domain=-0.20288044871691344:1.219098042976501] plot(\x,{(-0.--1.*\x)/0.24197955910407543});
\draw [color=cqcqcq,domain=-0.20288044871691344:1.219098042976501] plot(\x,{(--0.6653072747728902-0.*\x)/1.});
\draw [->,color=ffqqqq] (0.039434099872907004,0.6653072747728902) -- (0.16099076101827794,0.6653072747728902);
\draw (-0.04002816335537714,0.) node[anchor=north west] {$L^{\infty}$};
\draw (-0.15521636519646378,0.7) node[anchor=north west] {$H^{s,p}$};
\draw (0.1,0.7) node[anchor=north west] {$H^{s,q}$};
\begin{scriptsize}
\draw [fill=ffqqqq] (0.05927201064555962,1.) circle (0.5pt);
\draw [fill=ffqqqq] (0.24197955910407543,1.) circle (0.5pt);
\draw [fill=ffqqqq] (0.,0.) circle (0.5pt);
\draw [fill=ffqqqq] (0.,0.) circle (0.5pt);
\draw [fill=ffqqqq] (0.,0.) circle (0.5pt);
\draw [fill=ffqqqq] (0.039434099872907004,0.6653072747728902) circle (0.5pt);
\draw [fill=ffqqqq] (0.2217690915909634,0.6653072747728902) circle (0.5pt);
\draw [fill=ffqqqq] (0.16099076101827794,0.6653072747728902) circle (0.5pt);
\draw [fill=ffqqqq] (0.,0.) circle (0.5pt);
\draw [fill=ffqqqq] (0.3333333333333333,1.) circle (0.5pt);
\end{scriptsize}
\end{tikzpicture}
}
\caption{Interpolation in the proofs  of Propositions \ref{propoCompositionQCSobolevFractionalSub} and  \ref{propoCompositionQCSobolevFractionalSuper}.}\label{figActionInFractional}
\end{figure}
 Proposition \ref{propoCompositionQCSobolevFractionalSub} is proven by an interpolation argument, which is summarized in Figure \ref{figActionInFractional} (a). The idea behind is just the Thales Theorem: horizontal lines correspond to a given smoothness parameter $s$. The homogeneity in the critical line is $0$, and therefore, any pair of straight lines which intersect at the critical line will be suitable for interpolating the results in Lemma \ref{lemCompositionQCLebegueRn} and Theorem \ref{theoCompositionQCSobolev}. When using lines intersecting at $(1,n)$, we make sure that $q_0>1$ and $q_1>1$, which is necessary for the interpolation to make sense.

Next we study the critical case. Here the interpolation limits according to the previous argument should be $L^\infty$ and $W^{1,n}$. But this is not a complex interpolation couple, so we will get uniform bounds by an interpolation of $L^{q_0}$ and $W^{1,n}$ and then we will use Lemma \ref{lemUniformlyBounded} below to end the proof. 
\begin{lemma}\label{lemUniformlyBounded}
Let $0<s<\infty$ and let $1<p_0<\infty$. Let $f\in \mathcal{S}'$ satisfy that for every $\varepsilon$, there exists $p$ with $|p-p_0|<\varepsilon$ such that
$$\norm{f}_{H^{s,p}}\leq 1.$$
Then,
$$\norm{f}_{H^{s,p_0}}\leq C,$$
with C depending on $s$ and $p_0$.
\end{lemma}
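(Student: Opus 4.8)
The plan is to exploit the fact that, once the Littlewood--Paley family $\Psi$ is fixed, the Bessel potential norm $\norm{f}_{H^{s,p}}=\norm{f}_{F^s_{p,2}}$ is literally an $L^p$ norm of a single function attached to $f$, with no dependence on $p$ beyond the outer integrability exponent. Concretely I would set
$$g(x) := \left(\sum_{j=0}^\infty 2^{2sj}\left|\left(\psi_j\widehat{f}\right)\widecheck{\,}(x)\right|^2\right)^{\frac12},$$
so that by the very definition of $F^s_{p,2}$ one has $\norm{f}_{H^{s,p}}=\norm{g}_{L^p}$ for every admissible $p$. The key observation is that $g$ is one fixed nonnegative measurable function, the \emph{same} for all exponents: each frequency piece $\left(\psi_j\widehat{f}\right)\widecheck{\,}$ is, by Paley--Wiener, a smooth function (the inverse transform of a compactly frequency-supported distribution), so $g$ is a well-defined measurable function with values in $[0,\infty]$, and as soon as one of the norms $\norm{g}_{L^p}$ is finite, $g$ is finite almost everywhere.

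Having reduced matters to a statement about a single function, the hypothesis furnishes a sequence $p_k\to p_0$ with $\norm{g}_{L^{p_k}}\leq 1$, i.e. $\int_{\R^n}g^{p_k}\,dm\leq 1$. Then I would invoke Fatou's lemma: since for every fixed $x$ the map $p\mapsto g(x)^p$ is continuous (with $0^p=0$ because $p_0>0$), we have $g^{p_k}\to g^{p_0}$ pointwise, whence
$$\int_{\R^n}g^{p_0}\,dm=\int_{\R^n}\liminf_k g^{p_k}\,dm\leq \liminf_k\int_{\R^n}g^{p_k}\,dm\leq 1.$$
This gives $\norm{f}_{H^{s,p_0}}=\norm{g}_{L^{p_0}}\leq 1$, proving the claim with $C=1$; in particular the bound is uniform and, a fortiori, depends only on $s$ and $p_0$.

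The argument has essentially no hard step: the only thing to be careful about is the bookkeeping that makes $g$ a legitimate, $p$-independent object to which Fatou applies---namely that $g$ is measurable and finite a.e. (guaranteed by the smoothness of the frequency-localized pieces together with the finiteness of at least one $L^{p_k}$ norm), and that the pointwise convergence $g^{p_k}\to g^{p_0}$ holds everywhere, including on $\{g=0\}$ and on the null set $\{g=+\infty\}$. Once these routine points are settled, the lower semicontinuity of $L^p$ norms under pointwise convergence furnished by Fatou's lemma closes the proof at once.
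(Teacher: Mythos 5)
Your proof is correct, but it follows a genuinely different route from the paper's. The paper first transfers the problem to Lebesgue spaces through the lifting operator $I_s=F^{-1}(1+|x|^2)^{\frac s2}F$, which is an isomorphism $H^{s,r}\to L^r$ whose operator norm depends continuously on $r\in(1,\infty)$ by interpolation; it then shows $\norm{I_sf}_{L^{p}}\to\norm{I_sf}_{L^{p_0}}$ by splitting the integral over $\{|I_sf|\ge 1\}$ and $\{|I_sf|<1\}$ and applying monotone convergence, which yields the bound with a constant $C_{s,p_0}$ inherited from the lifting. You instead exploit that, with the paper's definition $H^{s,p}=F^s_{p,2}$ and a fixed family $\Psi$, the norm is \emph{exactly} the $L^p$ norm of one fixed square function $g$, so only one-sided lower semicontinuity (Fatou) is needed rather than an actual limit, and you obtain $C=1$. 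Both arguments ultimately reduce to the same elementary fact about the $L^{p_k}$ norms of a single measurable function; yours dispenses with the lifting property and the interpolation step, at the cost of one caveat: the constant $C=1$ is tied to the fixed-$\Psi$ realization of the norm. If the $H^{s,p}$ norm is instead realized as $\norm{I_sf}_{L^p}$ (as the paper's proof implicitly does), the equivalence constants between the two realizations depend on $p$; they are, however, bounded for $p$ near $p_0$, so the conclusion with a constant depending only on $s$ and $p_0$ is unaffected. Your routine verifications (measurability of $g$, finiteness a.e.\ once one $L^{p_k}$ norm is finite, pointwise convergence $g^{p_k}\to g^{p_0}$ including on $\{g=0\}$) are all in order.
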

\begin{proof}
By the lifting property (see \cite[Theorem 2.3.8]{TriebelTheory}), the linear map $I_s= F^{-1}(1+|x|^2)^{\frac{s}2} F$ induces an isomorphism between $H^{s,r}$ and $L^r$. By interpolation, the norm of this map varies continuously on $1<r<\infty$. Let $\varepsilon>0$ and let $p$ with $|p-p_0|<\varepsilon$ such that
$$\norm{f}_{H^{s,p}}\leq 1.$$
Thus,
$$
\norm{f}_{H^{s,p_0}}\leq C_{s,p_0} \norm{I_s f}_{L^{p_0}} = C_{s,p_0} \left(\norm{I_s f}_{L^{p_0}} -\norm{I_s f}_{L^{p}} \right)+ C_{s,p_0} \norm{I_s f}_{L^{p}} .$$
The last term in the right-hand side above is uniformly bounded by the continuity of $I_s$ and our hypothesis. Namely, for $\varepsilon$ small enough, we can grant
$$\norm{I_s f}_{L^{p}} \leq C_{s,p_0} \norm{f}_{H^{s,p}} \leq  C_{s,p_0}.$$
On the other hand, by the monotone convergence theorem
$$\int |I_s f|^{p} =\int_{|I_s f|\geq 1} |I_s f|^{p} +\int_{|I_s f|<1} |I_s f|^{p}  \xrightarrow{p\to p_0}  \int_{|I_s f|\geq 1} |I_s f|^{p_0} +\int_{|I_s f|<1} |I_s f|^{p_0} = \int |I_s f|^{p_0} .
$$
\end{proof}

\begin{proposition}\label{propoCompositionQCSobolevFractionalCritical}
 Let $n\ge 2$, $K\geq 1$, $0\leq s \leq 1$. Given a  $K$-quasiconformal mapping $\phi:\Omega \to \Omega'$ between two domains in $\R^{n}$, $\phi:\Omega \to \Omega'$ with $2B_1\subset \Omega$ and $\phi(2B_1)\subset B_1$, and real numbers $b$ and $C_b$ satisfying \rf{eqAKBKRn} for $U=2B$,  there exists a constant $C=C_{K,n,s,b,C_b}$ such that
\begin{equation}\label{eqTBoundedFractionalCritical}
  \norm{T_\phi^{B_1}}_{{H}^{s,\frac{n}{s}} \to {H}^{s,\frac{n}{s}}} \leq C_{K,n,s,b,C_b}.
 \end{equation} 
\end{proposition}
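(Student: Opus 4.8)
The plan is to imitate the interpolation scheme of Proposition \ref{propoCompositionQCSobolevFractionalSub}, but with the Sobolev endpoint frozen at the critical exponent $p_1=q_1=n$ (so that no constant blows up), and then to recover the critical target exponent $n/s$ by the approximation argument encoded in Lemma \ref{lemUniformlyBounded}. The cases $s\in\{0,1\}$ are immediate, being $L^\infty\to L^\infty$ via Lemma \ref{lemCompositionQCLebegueRn} and $W^{1,n}\to W^{1,n}$ via the critical case of Corollary \ref{coroCompositionSobolevRn}, so I would assume $0<s<1$ throughout. First I would localise on the source: since $\varphi_{B_1}\equiv 1$ on $B_1$ and $\phi(2B_1)\subset B_1$, one has $T_\phi^{B_1}f=T_\phi^{B_1}(\varphi_{B_1}f)$, so by the pointwise multiplier boundedness of $\varphi_{B_1}$ on $H^{s,r}$ it suffices to bound $T_\phi^{B_1}g$ for $g:=\varphi_{B_1}f$ supported in $2B_1$, with $\norm{g}_{H^{s,r}}\leq C\norm{f}_{H^{s,r}}$.

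Next I would fix a large finite $p_0$ and set $\frac1{q_0}=\frac{b+1}{b}\frac1{p_0}$ together with $p_1=q_1=n$; for $p_0$ large one has $q_0>1$. Defining $p,q$ by $\frac1p=\frac{1-s}{p_0}+\frac sn$ and $\frac1q=\frac{1-s}{q_0}+\frac sn$ (so that $p,q<\frac ns$ and $p,q\to\frac ns$ as $p_0\to\infty$), Theorem \ref{theoLebesgueSobolevInterpolation} identifies $(L^{p_0},W^{1,n})_{[s]}=H^{s,p}$ and $(L^{q_0},W^{1,n})_{[s]}=H^{s,q}$. Corollary \ref{coroCompositionLebesgueRn} gives $\norm{T_\phi^{B_1}}_{L^{p_0}\to L^{q_0}}\leq C_b^{1/p_0}$, while the critical case of Corollary \ref{coroCompositionSobolevRn} gives $\norm{T_\phi^{B_1}}_{W^{1,n}\to W^{1,n}}\leq C_{K,n,b,C_b}$, a constant independent of $p_0$. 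Hence Theorem \ref{theoInterpolationProperty} yields
$$\norm{T_\phi^{B_1}}_{H^{s,p}\to H^{s,q}}\leq \left(C_b^{1/p_0}\right)^{1-s} C_{K,n,b,C_b}^{\,s},$$
which stays uniformly bounded as $p_0\to\infty$, because $C_b^{1/p_0}\to 1$. This is exactly where freezing $p_1=q_1=n$ pays off: the analogous constant in Proposition \ref{propoCompositionQCSobolevFractionalSub} blows up as $p\to n/s$ through the Gagliardo--Nirenberg--Sobolev inequality (cf.\ Remark \ref{remcontinuity}), whereas here it is fixed.

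Combining this with the embedding $\norm{g}_{H^{s,p}}\leq C\norm{g}_{H^{s,n/s}}$, valid uniformly for $p$ near $n/s$ since $g$ is supported in the fixed compact set $2B_1$ (a standard embedding for distributions with compact support), I would obtain for every $q<\frac ns$ close to $\frac ns$
$$\norm{T_\phi^{B_1}f}_{H^{s,q}}=\norm{T_\phi^{B_1}g}_{H^{s,q}}\leq C\,\norm{g}_{H^{s,n/s}}\leq C\,\norm{f}_{H^{s,n/s}},$$
with $C$ independent of $q$. Since $T_\phi^{B_1}f$ is supported in $2B_1$ and these bounds hold for $q$ arbitrarily close to $n/s$, Lemma \ref{lemUniformlyBounded} applied with $p_0:=n/s$ to the distribution $T_\phi^{B_1}f/\bigl(C\norm{f}_{H^{s,n/s}}\bigr)$ upgrades them to the critical estimate $\norm{T_\phi^{B_1}f}_{H^{s,n/s}}\leq C\,\norm{f}_{H^{s,n/s}}$, which is \rf{eqTBoundedFractionalCritical}.

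The main obstacle is precisely the critical endpoint: the natural interpolation couple for $H^{s,n/s}$ would pair $L^\infty$ with $W^{1,n}$, but $L^\infty$ is not admissible for complex interpolation, and since the hypotheses only provide the lower Jacobian bound $C_b$ (not $C_a$) one cannot instead raise the Sobolev endpoint above $n$. The resolution is the two-step device above—uniform subcritical bounds obtained by fixing $p_1=q_1=n$, followed by the limiting argument of Lemma \ref{lemUniformlyBounded}—and the only delicate points are the uniformity, as $p,q\to n/s$, of both the interpolation operator norm and the compact-support embedding constant.
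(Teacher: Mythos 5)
Your proposal is correct and follows essentially the same route as the paper: interpolation between a Lebesgue estimate $L^{p_0}\to L^{q_0}$ (with $\frac1{q_0}=\frac{b+1}{b}\frac1{p_0}$ and $p_0$ large) and the critical Sobolev estimate $W^{1,n}\to W^{1,n}$, followed by the compact-support embedding $H^{s,n/s}\hookrightarrow H^{s,p}$ after multiplying by $\varphi_{B_1}$, and finally Lemma \ref{lemUniformlyBounded} to pass to the endpoint $q=n/s$. The only difference is cosmetic—you take $p_0$ as the free parameter where the paper starts from $q$ close to $n/s$ and solves for $p_0$—so no further comment is needed.
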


\begin{proof}
Let $q$ be such that $0< \frac1{q}-\frac{s}{n}<\varepsilon_0$ with $\varepsilon_0$ small enough.
Let us define $p\in(q,\frac{n}{s})$ by the relation
\begin{equation}\label{eqDefineCriticalAproxP}
\frac1q =: \frac1p +\frac1b\left(\frac1p-\frac{s}{n}\right).
\end{equation}
Let us define $p_0$ and $q_0$ by the following relations:
\begin{equation}\label{eqDefineCriticalPjQj}
\left(\frac1n-\frac1{p}\right)\frac{1}{1-s}=\frac1n-\frac1{p_0} \mbox{ \quad\quad and \quad\quad } \left(\frac1n-\frac1{q}\right)\frac{1}{1-s}=\frac1n-\frac1{q_0} .
\end{equation}
 It follows immediately that $p_0>q_0>n>1$. Let $p_1=q_1=n$. Note that \rf{eqInterpolateP} is equivalent to 
$$\frac1n-\frac1p = (1-s)\left(\frac1n-\frac1{p_0}\right) + s\left(\frac1n-\frac1{p_1}\right),$$
and this holds trivially. The same happens with \rf{eqInterpolateQ}. Let we see that $T^B_\phi:L^{p_0} \to L^{q_0}$. Using identities \rf{eqDefineCriticalAproxP} and \rf{eqDefineCriticalPjQj} we get
\begin{align*}
\frac1n-\frac1{q_0}
	& = \frac1{1-s}\left(\frac1n-\frac1{q}\right)
		= \frac1{1-s}\left(\frac1n - \frac1p -\frac1b\left(\frac1p - \frac1n + \frac1n -\frac{s}{n}\right)\right)\\
	& =  \frac1n - \frac1{p_0}  - \frac1b \left(\frac1{p_0}-\frac1n\right) - \frac1{1-s}\left(\frac1b\left(\frac{1-s}n\right)\right) 
		= \frac1n - \frac1{p_0}  - \frac1b \frac1{p_0},
\end{align*}
so $\frac1{q_0}= \frac1{p_0}  + \frac1b \frac1{p_0}$ with $q_0>1$. By Corollary \ref{coroCompositionLebesgueRn}, we get
$$\norm{T_\phi^{B_1}}_{L^{p_0} \to L^{q_0}} \leq   C_b^{\frac1{p_0}}.$$
By Corollary \ref{coroCompositionSobolevRn} we also have that
$$\norm{T_\phi^{B_1}}_{{W}^{1,p_1} \to {W}^{1,q_1}} = \norm{T_\phi^{B_1}}_{{W}^{1,n} \to {W}^{1,n}} \leq C_{K,n,b,C_b}.$$
Since \rf{eqInterpolateP} and \rf{eqInterpolateQ} hold, using Theorem \ref{theoInterpolationProperty}, 
\begin{equation*}
\norm{T_\phi^{B_1}}_{H^{s,p}\to H^{s,q}}\leq\norm{T_\phi^{B_1}}_{L^{p_0} \to L^{q_0}}^{1-s}\norm{T_\phi^{B_1}}_{W^{1,p_1} \to W^{1,q_1}}^s \leq C_b^{\frac{1-s}{p_0}}    C_{K,n,b,C_b}^s.
\end{equation*} 
Note that $\frac1{p_0}<\frac1{q_0}=\frac1n-\frac1{(1-s)n}+\frac1{q(1-s)}=\frac1{1-s}\left( \frac1{q}-\frac{s}{n}\right)<\frac{\varepsilon_0}{1-s}$. Thus, $C_b^{\frac{1-s}{p_0}} \leq 2$ for $\varepsilon_0<\frac{\log(2)}{\log(C_b)}$.

Take $\varphi_{B_1}$ and let  $\widetilde{f}=\varphi_{B_1} f$. Then $T_\phi^{B_1} f=T_\phi^{B_1} \widetilde{f}$. Thus, writing $c_0=2C_{K,n,b,C_b}^s$, we get
\begin{align*}
\norm{T_\phi^{B_1} f}_{H^{s,q}}
	& =\norm{T_\phi^{B_1} \widetilde{f}}_{H^{s,q}} 
		\leq c_0 \norm{\widetilde{f}}_{H^{s,p}}
		\leq c_0 C_{n,s} \norm{\widetilde{f}}_{H^{s,\frac ns}}\\
	& \leq c_0 C_{n,s} \norm{\varphi_{B_1} \cdot}_{H^{s,\frac{n}{s}}\to H^{s,\frac{n}{s}}}\norm{f}_{H^{s,\frac{n}{s}}},
\end{align*}
where $\varphi_{B_1} \cdot : f\mapsto \varphi_{B_1} f$ stands for the pointwise multiplication operator, which has norm one in every $L^p$. Thus, by the interpolation property, it can be uniformly bounded by a constant $C$, so
$$  \norm{T_\phi^{B_1}}_{{H}^{s,\frac{n}s} \to {H}^{s,q}} \leq  C_{K,n,b,C_b}^s C_{n,s}.
$$
Lemma \ref{lemUniformlyBounded} implies that this uniform bound applies to the limit case, modulo constants depending on $n$ and $s$.
\end{proof}

The supercritical case follows the same pattern. The interpolation limits according to the previous argument should be $L^\infty$ for the $0$-indices and $W^{1,sp}$ or $W^{1,sq}$ for the $1$-indices (see Figure \ref{figActionInFractional} (b)). Again, this is not a complex interpolation couple, so we will follow the approximation procedure above. Moreover, in this context we need to use the parameter $a$ for the classical Sobolev spaces, while we are forced to use the parameter $b$ in the Lebesgue spaces. When taking limits, however, the parameter $b$ will vanish.

\begin{proposition}\label{propoCompositionQCSobolevFractionalSuper}
 Let $n\ge 2$, $K\geq 1$, $0\leq s \leq 1$ and $\frac{n}{s}<p<\infty$. Given a  $K$-quasiconformal mapping $\phi:\Omega \to \Omega'$ between two domains in $\R^{n}$, with $2B_1\subset \Omega$ and $\phi(2B_1)\subset B_1$, and positive real numbers $a$, $b$, $C_a$ and $C_b$ satisfying \rf{eqAKBKRn} for $U=2B$, if $\frac1q := \frac1p + \frac{1}{a}\left(\frac{s}{n}-\frac1p\right) < 1$,  then there exists a constant $C=C_{K,n,s,p,a}$ such that
\begin{equation}\label{eqTBoundedFractionalSupercritical}
  \norm{T_\phi^{B_1}}_{{H}^{s,p} \to {H}^{s,q}} \leq C \left(1+C_a^{\frac{s}{n}-\frac{1}{p}}\right).
 \end{equation} 
\end{proposition}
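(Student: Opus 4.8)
The plan is to imitate the critical case (Proposition \ref{propoCompositionQCSobolevFractionalCritical}), because the natural endpoints for the interpolation are now $L^\infty$ at the $0$-index and the Sobolev spaces $W^{1,sp}$, $W^{1,sq}$ at the $1$-index (see Figure \ref{figActionInFractional} (b)), and the pair $(L^\infty,W^{1,sp})$ is not a complex interpolation couple. Accordingly, I would fix the $1$-indices to be $p_1:=sp$ and $q_1:=sq$, replace the $L^\infty$ endpoint by $L^{p_0}$ with $p_0$ large but finite, and let $p_0\to\infty$ at the end.

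First I would check that $(p_1,q_1)=(sp,sq)$ is admissible for the supercritical case of Corollary \ref{coroCompositionSobolevRn}: since $sp>n$ the map is supercritical, and a direct computation shows that the hypothesis $\frac1q=\frac1p+\frac1a\left(\frac sn-\frac1p\right)$ is equivalent to $\frac1{q_1}=\frac1{p_1}+\frac1a\left(\frac1n-\frac1{p_1}\right)$, whence $\norm{T_\phi^{B_1}}_{W^{1,p_1}\to W^{1,q_1}}\leq C\left(1+C_a^{\frac1n-\frac1{sp}}\right)$. Then, for $p_0$ large I would set $q_0$ by the Lebesgue relation $\frac1{q_0}=\frac1{p_0}+\frac1b\frac1{p_0}$ of Corollary \ref{coroCompositionLebesgueRn}, so that $\norm{T_\phi^{B_1}}_{L^{p_0}\to L^{q_0}}\leq C_b^{1/p_0}$, and I would define $\tilde p,\tilde q$ through the interpolation relations $\frac1{\tilde p}=\frac{1-s}{p_0}+\frac{s}{p_1}=\frac{1-s}{p_0}+\frac1p$ and $\frac1{\tilde q}=\frac{1-s}{q_0}+\frac{s}{q_1}=\frac{1-s}{q_0}+\frac1q$. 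As $p_0\to\infty$ one has $\tilde p\to p$ and $\tilde q\to q$, with $\tilde p<p$ and $\tilde q<q$. Theorem \ref{theoInterpolationProperty} then yields
$$\norm{T_\phi^{B_1}}_{H^{s,\tilde p}\to H^{s,\tilde q}}\leq C_b^{\frac{1-s}{p_0}}\left[C\left(1+C_a^{\frac1n-\frac1{sp}}\right)\right]^s\leq C\left(1+C_a^{\frac sn-\frac1p}\right),$$
where the last step uses $(1+x)^s\leq 1+x^s$ together with $s\left(\frac1n-\frac1{sp}\right)=\frac sn-\frac1p$ and, for $p_0$ large enough, $C_b^{(1-s)/p_0}\leq 2$; this is exactly where the Lebesgue parameter $b$ drops out in the limit.

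To pass from $(\tilde p,\tilde q)$ to the desired $(p,q)$ I would use the two devices of the critical case. For the source, the hypothesis $\phi(2B_1)\subset B_1$ gives $\varphi_{B_1}\circ\phi\equiv 1$ on $2B_1$, so $T_\phi^{B_1}f=T_\phi^{B_1}\widetilde f$ with $\widetilde f:=\varphi_{B_1}f$ compactly supported; since $\tilde p<p$, the embedding of $H^{s,p}$ into $H^{s,\tilde p}$ on the fixed bounded support together with the uniform $H^{s,p}$-boundedness of multiplication by $\varphi_{B_1}$ gives $\norm{\widetilde f}_{H^{s,\tilde p}}\leq C\norm{f}_{H^{s,p}}$, with a constant controlled uniformly for $\tilde p$ near $p$. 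This produces $\norm{T_\phi^{B_1}f}_{H^{s,\tilde q}}\leq C\left(1+C_a^{\frac sn-\frac1p}\right)\norm{f}_{H^{s,p}}$ uniformly for all $\tilde q$ in a left neighbourhood of $q$. For the target, I would then fix $f$ and apply Lemma \ref{lemUniformlyBounded} to $T_\phi^{B_1}f$: having the bound for $\tilde q$ arbitrarily close to $q$ upgrades it to the endpoint $H^{s,q}$, at the cost of a constant depending only on $s$ and $q$, which is the claimed estimate.

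The main obstacle I expect is the bookkeeping of the two simultaneous limits: as $p_0\to\infty$ both $\tilde p$ and $\tilde q$ move, so I must keep the source pinned at $H^{s,p}$ (via the $\varphi_{B_1}$-identity and a compact-support embedding) while letting only the target index $\tilde q$ run up to $q$, and I must verify that all the embedding, multiplier and interpolation constants stay bounded in these limits (in particular that $C_b^{(1-s)/p_0}$ is controlled and that $b$ genuinely disappears). Checking admissibility of the indices for Theorem \ref{theoInterpolationProperty}, i.e.\ that $1<p_0,q_0,p_1,q_1,\tilde p,\tilde q<\infty$ once $p_0$ is large, is routine but should be carried out.
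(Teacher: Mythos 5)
Your argument is correct and follows the same overall strategy as the paper (approximate the non-admissible endpoint $L^\infty$ by $L^{p_0}$ with $p_0$ large, interpolate, and recover the endpoint by a limiting argument via Lemma \ref{lemUniformlyBounded}), but the bookkeeping is arranged differently, and the difference is worth recording. The paper keeps the interpolated \emph{source} index pinned at exactly $p$: it first chooses $p_0$, then \emph{defines} $p_1$ by the interpolation identity \rf{eqInterpolateP}, so that $p_1$ is only close to $sp$, and only the target index $\widetilde q$ drifts. The price is that the Sobolev-level constant $C_{K,n,p_1,q_1}$ from Corollary \ref{coroCompositionSobolevRn} now depends on a moving exponent $p_1$, which is why the paper has to invoke Remark \ref{remcontinuity} to get a bound uniform in $p_1$ near $sp$. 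You instead fix $(p_1,q_1)=(sp,sq)$ exactly — and your verification that this pair is admissible for the supercritical case of Corollary \ref{coroCompositionSobolevRn} is right, including $q_1=sq>n>1$ — so the Sobolev constant is a single fixed number and Remark \ref{remcontinuity} is not needed; the price you pay is that both $\tilde p$ and $\tilde q$ drift, and you must compensate the source drift with the identity $T_\phi^{B_1}f=T_\phi^{B_1}(\varphi_{B_1}f)$ and the compact-support embedding $H^{s,p}\hookrightarrow H^{s,\tilde p}$ for $\tilde p<p$, uniformly for $\tilde p$ near $p$. That device is exactly the one the paper already uses in the critical case (Proposition \ref{propoCompositionQCSobolevFractionalCritical}), so nothing new is being assumed; the uniformity of the embedding and multiplier constants is the only point you should spell out, but it is routine for functions supported in the fixed ball $2B_1$. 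Both routes deliver the stated bound with the factor $1+C_a^{\frac sn-\frac1p}$ and with $b$ disappearing in the limit through $C_b^{(1-s)/p_0}\leq 2$.
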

\begin{proof}
Let $0<\frac1{q_0}<\varepsilon_0<\frac1p$ with $\varepsilon_0$ small enough.
Let us define $p_0$ by the relation
$$\frac1{q_0}= \frac1{p_0}  + \frac1b \frac1{p_0}.$$
Note that $\frac1{p_0}<\frac1{q_0}<\frac1{p}$. Next, we define $p_1$ by imposing \rf{eqInterpolateP}, that is, $\frac1p=  \frac{1-s}{p_0}+ \frac{s}{p_1}$. In particular, $\frac1{p_1}=\frac1{sp}-\frac{1-s}{sp_0}<\frac1n$. Thus, we are in the supercritical range and $\left|\frac1{p_1}-\frac1{sp}\right|<\frac{(1-s)\varepsilon_0}s$. Let us define $q_1$ such that $T_\phi^{B_1}$ maps $W^{1,p_1}$ to $W^{1,q_1}$, that is,
$$\frac1{q_1} := \frac1{p_1} + \frac{1}{a}\left(\frac{1}{n}-\frac1{p_1}\right).$$
Using the definitions we get
$$\left|\frac1{q_1}-\frac1{sq}\right|=\left|\frac{1}{p_1}\left(1-\frac{1}{a}\right)+\frac{1}{an}-\frac{1}{sp}\left(1-\frac{1}{a}\right)-\frac{s}{san}\right|<\frac{(1-s)\varepsilon_0}s\frac{a-1}{a}.$$
 Finally, we define $\widetilde{q}$ via \rf{eqInterpolateQ}, that is 
\begin{equation}\label{eqInterpolateQTilde}
\frac1{\widetilde{q}}=  \frac{1-s}{q_0}+ \frac{s}{q_1}.
\end{equation}
Note that 
$$\left|\frac1{\widetilde{q}}-\frac1q\right|=\left|\frac{1-s}{q_0} + \frac s{q_1}-\frac1q\right| \leq  \frac{1-s}{q_0} + s\left|\frac1{q_1}-\frac1{sq}\right|\leq \varepsilon_0 (1-s) \frac{2a-1}{a}.$$

From Corollary \ref{coroCompositionLebesgueRn} we have that
 $$\norm{T_\phi^{B_1}}_{L^{p_0} \to L^{q_0}} \leq   C_{b}^{\frac1{p_0}},$$
and from Corollary \ref{coroCompositionSobolevRn} we have that
$$\norm{T_\phi^{B_1}}_{{W}^{1,p_1} \to {W}^{1,q_1}} \leq C_{K,n,p_1,q_1}\left(1+C_a^{\frac1n-\frac1{p_1}} \right).$$
Using Theorem \ref{theoInterpolationProperty}, 
\begin{equation*}
\norm{T_\phi^{B_1}}_{H^{s,p}\to H^{s,\widetilde{q}}}
	\leq\norm{T_\phi^{B_1}}_{L^{p_0} \to L^{q_0}}^{1-s}\norm{T_\phi^{B_1}}_{H^{s,p_1} \to H^{s,q_1}}^s 
	\leq C_{b}^{\frac{1-s}{p_0}} C_{K,n,p_1,a}^s\left(1+C_a^{\frac sn-\frac s{p_1}} \right).
\end{equation*} 
On one hand, we have that $C_{b}^{\frac{1-s}{p_0}} \leq 2$ for $\varepsilon_0<\log(2)/\log(C_{b}^{1-s})$. On the other hand, from Remark \ref{remcontinuity}, the constant $C_{K,n,p_1,a}^s$ appearing in Corollary \ref{coroCompositionSobolevRn} is uniformly bounded in $p_1$ for $\left|\frac1{p_1}-\frac1{sp}\right|<\frac{(1-s)\varepsilon_0}s<\frac12 \left|\frac1{sp}-\frac1n\right|$.  Thus
\begin{equation*}
\norm{T_\phi^{B_1}}_{H^{s,p}\to H^{s,\widetilde{q}}}
	\leq  C_{K,n,s,p,a}\left(1+C_a^{\frac sn-\frac1p} \right).
\end{equation*} 
Lemma \ref{lemUniformlyBounded} implies that this uniform bound applies to the limit case, modulo constants depending on $n$ and $s$.
\end{proof}

\begin{proof}[Proof of \rf{eqTBoundedFractionalGeneral}]
Propositions \ref{propoCompositionQCSobolevFractionalSub}, \ref{propoCompositionQCSobolevFractionalCritical} and \ref{propoCompositionQCSobolevFractionalSuper} show that \rf{eqTBoundedFractionalGeneral} holds for $B=B'=B_1$. Estimate \rf{eqTBoundedFractionalGeneral} follows by translation and rescaling. 
\end{proof}

\subsection{Other fractional spaces}\label{secOtherSpaces}
\begin{theorem}[see {\cite[Theorems 2.4.2, 2.4.3, 2.4.7 and remark 2.4.7/2]{TriebelTheory}}]
Let $-\infty <s_0,s_1<\infty$, let $0 < r \leq \infty$, $0<p<\infty$ and $0<\Theta <1$. Let 
$$s=(1-\Theta)s_0+\Theta s_1.$$

 If  $s_0\neq s_1$, then
\begin{equation}\label{eqInterpolationRealTriebelLizorkin}
(H^{s_0,p},H^{s_1,p})_{\Theta,r} = B^s_{p,r},
\end{equation}
where $(\cdot,\cdot)_{\Theta,r}$ stands for the real interpolation functor described in \cite{BerghLofstrom}.
\end{theorem}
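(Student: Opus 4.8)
The plan is to pass from the function spaces to sequence spaces by means of the Littlewood--Paley decomposition and to carry out the interpolation there. First I would recall that $H^{s,p}=F^s_{p,2}$, so the statement to be proven is $(F^{s_0}_{p,2},F^{s_1}_{p,2})_{\Theta,r}=B^s_{p,r}$. Fixing $\Psi=\{\psi_j\}\in\Phi(\R^n)$ and writing $f_j:=(\psi_j\widehat{f})\widecheck{\,}$, I would introduce the coretraction $S:f\mapsto\{f_j\}_j$ together with a retraction $R$ built from a slightly dilated partition $\{\widetilde\psi_j\}$, so that $R\circ S=\mathrm{id}$. By the Fourier-multiplier and vector-valued (Fefferman--Stein) maximal estimates that underlie the definitions in Section~\ref{secFractional}, the \emph{same} pair $(R,S)$ realizes $F^\sigma_{p,2}$ as a retract of $L^p(\ell^\sigma_2)$ (pointwise weighted $\ell^2$ with weights $2^{j\sigma}$, then $L^p$) and $B^\sigma_{p,r}$ as a retract of $\ell^\sigma_r(L^p)$ (each $L^p(f_j)$, weighted by $2^{j\sigma}$, then $\ell^r$). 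Since $(\cdot,\cdot)_{\Theta,r}$ is an exact interpolation functor and hence commutes with retracts (see \cite[Theorem~6.4.2]{BerghLofstrom}), the theorem reduces to the sequence-space identity
\begin{equation*}
(L^p(\ell^{s_0}_2),\,L^p(\ell^{s_1}_2))_{\Theta,r}=\ell^s_r(L^p).
\end{equation*}
A lifting by $I_{s_0}$, an isomorphism on both scales simultaneously, allows me to assume $s_0=0<s_1$ if convenient, but it is not essential.

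Both embeddings would follow from two-sided estimates of the $K$-functional of the couple $(X_0,X_1):=(L^p(\ell^{s_0}_2),L^p(\ell^{s_1}_2))$. For the lower bound, which I expect to be the easy half, I would test on a single Littlewood--Paley level: retaining only the $k$-th coordinate and using the triangle inequality inside $L^p$ gives, for every $t>0$,
\begin{equation*}
K(t,a)\geq \min(2^{ks_0},t\,2^{ks_1})\,\|a_k\|_{L^p}.
\end{equation*}
Choosing $t_k:=2^{-k(s_1-s_0)}$, so that $2^{ks_0}=t_k 2^{ks_1}$, and integrating over disjoint dyadic $t$-intervals $I_k$ around each $t_k$ yields
\begin{equation*}
\|a\|_{(X_0,X_1)_{\Theta,r}}^r=\int_0^\infty \big(t^{-\Theta}K(t,a)\big)^r\frac{dt}{t}\gtrsim\sum_k\big(2^{ks}\|a_k\|_{L^p}\big)^r=\|a\|_{\ell^s_r(L^p)}^r.
\end{equation*}
It is precisely here that $s_0\neq s_1$ enters: it spreads the points $t_k$ apart so that the $I_k$ are disjoint and each captures a single level.

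For the upper bound I would use the sharp-cutoff decomposition $a=b+c$ assigning each level $j$ to $X_0$ or $X_1$ according to whether $2^{js_0}\leq t\,2^{js_1}$, which gives
\begin{equation*}
K(t,a)\lesssim \Big\|\big(\textstyle\sum_j \min(2^{js_0},t\,2^{js_1})^2|a_j|^2\big)^{1/2}\Big\|_{L^p}.
\end{equation*}
When $p\geq 2$, Minkowski's inequality in $L^{p/2}$ moves the $L^p$ norm inside and reduces the claim to the classical scalar identity $(\ell^{s_0}_2,\ell^{s_1}_2)_{\Theta,r}=\ell^s_r$ applied to the scalar sequence $\{\|a_j\|_{L^p}\}$ (see \cite{BerghLofstrom}). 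The genuinely delicate case is $p<2$, where Minkowski runs the wrong way and the pointwise $\ell^2$ structure cannot simply be pulled out; here I would instead discretize $t=2^m$ and feed the min-weight estimate into a discrete Hardy inequality, whose one-sided sums converge because $\min(2^{js_0},t\,2^{js_1})$ decays geometrically away from the balancing level, again thanks to $s_0\neq s_1$. I expect this \emph{fine-index collapse} for $p<2$ --- showing that the inner $\ell^2$ (indeed any inner $\ell^{q_0},\ell^{q_1}$) is washed out by the weight mismatch and reassembles into the secondary index $r$, giving $\ell^s_r(L^p)$ --- to be the main technical obstacle; the lifting, the retraction reduction, and the lower bound are comparatively routine.
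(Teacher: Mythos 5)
Your plan is sound, but note first that the paper does not prove this statement at all: it is quoted verbatim from \cite{TriebelTheory}, so there is no in-paper argument to compare against. What you have reconstructed is essentially the standard textbook proof (retract onto sequence spaces, then compute the $K$-functional of the couple $\bigl(L^p(\ell^{s_0}_2),L^p(\ell^{s_1}_2)\bigr)$), and all the steps you describe are correct: the coretraction/retraction pair via a fattened partition $\widetilde\psi_j=\psi_{j-1}+\psi_j+\psi_{j+1}$, the single-level test for the lower bound, and the sharp-cutoff decomposition for the upper bound. The one place where you overestimate the difficulty is the case $p<2$. You do not need Minkowski in $L^{p/2}$ at all: the plain triangle inequality in the Banach space $L^p(\ell^2)$ already gives $K(t,a)\lesssim\sum_j\min(2^{js_0},t\,2^{js_1})\,\norm{a_j}_{L^p}$ for every $p\ge 1$, which is the $K$-functional bound for the couple $\bigl(\ell^{s_0}_1(L^p),\ell^{s_1}_1(L^p)\bigr)$; feeding this into the discrete Hardy computation (your own observation that the weights $t^{-\Theta}\min(2^{js_0},t\,2^{js_1})2^{-js}$ decay geometrically away from the balancing level when $s_0\neq s_1$) closes the upper bound uniformly in $p$, with no case split. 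In other words, the whole argument is the sandwich $\ell^{s_i}_1(L^p)\hookrightarrow L^p(\ell^{s_i}_2)\hookrightarrow\ell^{s_i}_\infty(L^p)$ together with $(\ell^{s_0}_{u}(L^p),\ell^{s_1}_{v}(L^p))_{\Theta,r}=\ell^s_r(L^p)$ for arbitrary fine indices $u,v$, which is \cite[Theorem 5.6.1]{BerghLofstrom} in vector-valued form. An even shorter route, which avoids sequence spaces entirely, is to use the elementary embeddings $B^{s_i}_{p,\min(p,2)}\hookrightarrow H^{s_i,p}\hookrightarrow B^{s_i}_{p,\max(p,2)}$ and the already-known Besov--Besov real interpolation with $s_0\neq s_1$, squeezing \rf{eqInterpolationRealTriebelLizorkin} between two copies of $B^s_{p,r}$. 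Either way, your proof is correct once the $p<2$ ``obstacle'' is dissolved as above; for $0<p<1$ one would replace the triangle inequality by the $p$-triangle inequality, but the paper only ever uses $1<p<\infty$.
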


The functor $(\cdot,\cdot)_{\Theta,r}$ is an exact interpolation functor of exponent $\Theta$, i.e. it satisfies the interpolation property described in Theorem \ref{theoInterpolationProperty}. Thus, using \rf{eqInterpolationRealTriebelLizorkin}, estimate  \rf{eqTBoundedFractionalGeneral} on neighboring $H^{\widetilde{s},p}$ spaces, and a limiting argument  with a Besov version of Lemma \ref{lemUniformlyBounded} we can argue as in the previous section, to get
$$\norm{T_\phi^B}_{B^s_{p,r} \to B^s_{q,r}} \leq C$$
for any $0<s<1$ and $0<r\leq\infty$.

\section{Sharpness}\label{secExamples}
In this section we provide examples that show the sharpness of Theorem \ref{theoCompositionQCSobolevFractionalMain}. 

For $k>0$, let $\phi_{k}(x):=x|x|^{k-1}$. We have that
\[D\phi(x)=|x|^{k-1}\left((\alpha-1)\frac{xx^{t}}{|x|^{2}}+Id\right).\]
Hence, we obtain $J_{\phi_{k}}(x)=k|x|^{n(k-1)}$, and, therefore, $\phi_{k}$ is $k^{n-1}$-quasiconformal if $k\ge 1$ and $\frac{(2-k)^{n}}{k}$-quasiconformal if $k<1$. It also satisfies \eqref{eqAKBKRn} for $a<\frac{1}{1-k}$ and $b>0$ if $k<1$ and for $a>1$ and $0<b<\frac{1}{k-1}$ if $k>1$ regardless of the chosen domain $U$. 

Next we recover the example  given in \cite{HenclKoskelaComposition} to show the sharpness on the subcritical setting (that is, when $sp<n$) for the composition of an unbounded $B^s_{p,p}$ function function with a quasiconformal mapping. Let
\[f_{\rho}(x):=\max\{|x|^{-\rho}-1,0\}\text{ with }\rho>0.\]
It is known that $f_{\rho}$ belongs to the space $H^{s,p}$ if and only if $0<\rho<\frac{n}{p}-s$ (see \cite[Lemma 2.3.1]{RunstSickel}). Let $b>0$ and $q\ge 1$ such that
\(\frac1q < \frac1p + \frac{1}{b}\left(\frac1p-\frac{s}{n}\right),\)
let 
\begin{equation}\label{eqVEDefinition}
\ve:=\frac1p + \frac{1}{b}\left(\frac1p-\frac{s}{n}\right)-\frac1q >0
\end{equation} 
and $\delta>0$ such that
\[(1-\delta)^{2}\left(\frac{n}{q}-s+n\ve\right)>\frac{n}{q}-s.\]
Define 
\begin{equation}\label{eqDelta}
k:=(1-\delta)\left(\frac{1}{b}+1\right)
\end{equation}
 and $\rho$ such that
 \begin{equation}\label{eqRho}
 (1-\delta)\left(\frac{n}{p}-s\right)<\rho<\frac{n}{p}-s.
 \end{equation}
Then, $\phi_{k}$ satisfies \eqref{eqAKBKRn} for $b$, and $f_{\rho}\in H^{s,p}$. On the other hand we have $f_{\rho}\circ\phi_{k}= f_{k\rho}$ and combining \rf{eqDelta} and \rf{eqRho} with \rf{eqVEDefinition}, we get 
\begin{align*}
&k\rho-\left(\frac{n}{q}-s\right)>(1-\delta)^{2}\left(\frac{1}{b}+1\right)\left(\frac{n}{p}-s\right)-\left(\frac{n}{q}-s\right)\\
&=(1-\delta)^{2}\left(\frac{n}{q}-s+n\ve\right)-\left(\frac{n}{q}-s\right)>0.
\end{align*}
Therefore, $f_{\rho}\circ\phi_{k}\notin H^{s,q}$. The interested reader may find in \cite[Lemma 4.2]{HenclKoskelaComposition} an example of a bounded function satisfying the same.

For the supercritical case, i.e. $sp>n$, we define
\[g_{\rho}(x):=\max\{1-|x|^{\rho},0\}\text{ with }\rho>0.\]
 Then, using again \cite[Lemma 2.3.1]{RunstSickel} we have that $g_{\rho}$ belongs to the Triebel-Lizorkin space $H^{s,p}$ if and only if $s-\frac{n}{p}<\rho$. Let $a> 1$ and $q\ge 1$ such that \(\frac1q <\frac1p + \frac1{a}\left(\frac{s}{n}-\frac1p\right),\) let 
 $$\ve:=\frac1p + \frac{1}{a}\left(\frac{s}{n}-\frac1p\right)-\frac1q$$
  and $\delta>0$ such that
\[(1+\delta)^{2}\left(s-\frac{n}{q}-n\ve\right)<s-\frac{n}{q}.\]
Define 
$$k:=(1+\delta)\left(1-\frac{1}{a}\right)$$
 with sufficient small $\delta$ to have  $k<1$, and $\rho$ such that 
 $$s-\frac{n}{p}<\rho<(1+\delta)\left(s-\frac{n}{p}\right).$$
  Then, $\phi_{k}$ satisfies \eqref{eqAKBKRn} for $a$, and $g_{\rho}\in H^{s,p}$. On the other hand we have $g_{\rho}\circ\phi_{k}= g_{k\rho}$ and arguing as before we obtain
\begin{align*}
&s-\frac{n}{q}-k\rho>s-\frac{n}{q}-(1+\delta)^{2}\left(1-\frac{1}{a}\right)\left(s-\frac{n}{p}\right)\\
&=s-\frac{n}{q}-(1+\delta)^{2}\left(s-\frac{n}{q}-n\ve\right)>0.
\end{align*}
Therefore, $f_{\rho}\circ\phi_{k}\notin H^{s,q}$.

\renewcommand{\abstractname}{Acknowledgements}
\begin{abstract}
The authors were funded by the European Research
Council under the grant agreement 307179-GFTIPFD. The authors want to thank Stanislav Hencl, Carlos Mora-Corral and Daniel Faraco for their contributions.

\end{abstract}

\bibliography{../../../bibtex/Llibres}
\end{document}